\theoremstyle{plain}
	\newtheorem{theorem}{Theorem}[section]
	\newtheorem*{theorem*}{Theorem}
	\newtheorem{lemma}[theorem]{Lemma}
	\newtheorem{corollary}[theorem]{Corollary}
\theoremstyle{definition}
\theoremstyle{remark}
\newcommand{\cH}	{\mathcal H}
\newcommand{\cK}	{\mathcal K}
\newcommand{\cP}	{\mathcal P}
\newcommand{\cQ}	{\mathcal Q}
\newcommand{\cR}	{\mathcal R}
\newcommand{\GLn}[1][n]	{\operatorname{GL}(#1)}
\newcommand{\ptwo}[2]	{\begin{pmatrix} #1 \\ #2 \\ \end{pmatrix}}
\newcommand{\R}		{\mathbb R}
\newcommand{\s}[2]	{\left\langle #1 , #2 \right\rangle}
\newcommand{\SLn}[1][n]	{\operatorname{SL}(#1)}
\title{The Centro-Affine Hadwiger Theorem}
\author{Christoph Haberl and Lukas Parapatits}
\date{}
\begin{document}
	\maketitle

	\begin{abstract}
		All upper semicontinuous and $\SLn$ invariant valuations on convex bodies
		containing the origin in their interiors are completely classified. Each such
		valuation is shown to be a linear combination of the Euler characteristic,
		the volume, the volume of the polar body, and the recently discovered Orlicz
		surface areas.	
	\end{abstract}

	\hspace{0.4cm} \small{Mathematics subject classification: 52A20, 52B45}
	
	\section{Introduction}\label{intro}
A valuation is a map $\mu:\mathcal{S}\to\R$ defined on a collection of sets $\mathcal{S}$ such that 
\[\mu(K\cup L)+\mu(K\cap L)=\mu (K)+\mu (L)\]
whenever $K$, $L$, $K\cup L$, $K\cap L$ are contained in $\mathcal{S}$. Valuations played a key role in Dehn's solution of Hilbert's Third Problem and have been an integral part of geometry ever since. Probably the most famous result on valuations is Hadwiger's theorem \cite{Hadwiger:V}. It classifies all continuous and rigid motion invariant valuations on the space $\cK^n$ of convex bodies, i.e. nonempty compact convex subsets of $\R^n$ equipped with the Hausdorff distance: 

\begin{theorem}
A map $\mu:\cK^n\to\R$ is a continuous and rigid motion invariant valuation if and only if there exist constants $c_0,\ldots, c_n\in\R$ such that
\[\mu (K)=c_0V_0(K)+\cdots+c_n V_n(K)\]
for all $K\in\cK^n$.
\end{theorem}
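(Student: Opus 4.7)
The plan is to prove the nontrivial direction: every continuous rigid motion invariant valuation $\mu:\cK^n\to\R$ is a linear combination of the intrinsic volumes $V_0,\ldots,V_n$. The converse is immediate, since each $V_i$ is itself continuous, rigid motion invariant, and a valuation, and linear combinations of valuations are again valuations.

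The argument proceeds by induction on $n$. For $n=1$ a valuation on compact intervals is determined by its values on points and on intervals of the form $[0,t]$; continuity together with the valuation identity $\mu([0,s+t]) + \mu(\{s\}) = \mu([0,s]) + \mu([s,s+t])$ force $\mu([0,t]) = c_0 + c_1 t$, recovering the two intrinsic volumes. For the inductive step, one first reduces to polytopes by continuity and the density of polytopes in $\cK^n$ with respect to the Hausdorff metric. Next, restrict $\mu$ to polytopes contained in a fixed hyperplane $H\subset\R^n$; rigid motion invariance makes this restriction independent of $H$, and the inductive hypothesis yields constants $c_0,\ldots,c_{n-1}$ such that $\mu$ agrees with $\sum_{i=0}^{n-1} c_i V_i$ on every polytope of dimension strictly less than $n$. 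Setting $\tilde\mu := \mu - \sum_{i=0}^{n-1} c_i V_i$ reduces the problem to the case where $\mu$ is \emph{simple}, i.e.\ vanishes on all convex bodies of dimension less than $n$.

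The main obstacle is then the classification of simple valuations: every continuous rigid motion invariant simple valuation on $\cK^n$ is a scalar multiple of the volume $V_n$. This is the deep content of Hadwiger's theorem. One route (Hadwiger's original) dissects simplices into congruent pieces, using simplicity to make the dissection additive and rotation invariance to compare simplices of the same volume, ultimately showing that $\tilde\mu(P)/V_n(P)$ is constant as $P$ ranges over full-dimensional simplices. A more modern approach relies on Klain's theorem, which asserts that a simple continuous translation invariant valuation vanishing on a single $n$-simplex vanishes identically; combined with a careful analysis on lines and Grassmannians, this reduces the classification of simple valuations to a finite-dimensional uniqueness statement. Granting the simple valuation theorem, one obtains $\tilde\mu = c_n V_n$, and the induction closes with the desired representation of $\mu$.
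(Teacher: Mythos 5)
The paper does not prove this statement: it is Hadwiger's classical theorem, cited from Hadwiger's 1957 monograph \cite{Hadwiger:V} and stated in the introduction purely as background to motivate the affine and centro-affine analogues that are the paper's actual subject. So there is no in-paper proof to compare yours against, and your proposal has to be judged on its own merits.

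As an outline of the standard modern argument your sketch is structurally sound. The induction on dimension, the base case via Cauchy's functional equation plus translation invariance, the observation that intrinsic volumes are independent of the ambient space (so the induction hypothesis applies to bodies lying in a hyperplane), the reduction to a simple valuation $\tilde\mu=\mu-\sum_{i<n}c_iV_i$, and the identification of the simple-valuation classification as the genuine hard content, are all correct and match the treatment in, e.g., Klain--Rota \cite{16}. Two caveats. First, the whole weight of the theorem rests on the volume theorem for simple valuations, which you explicitly leave unproved; that is acceptable for a sketch, but it means the proposal does not actually establish the result. Second, your paraphrase of Klain's theorem is imprecise: as usually stated it concerns simple, continuous, translation invariant, \emph{even} valuations, and the Klain function on the Grassmannian enters for that reason; the odd case is due to Schneider. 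Dropping the parity hypothesis makes the quoted statement false as written, though once one also has $\SOn$-invariance one can dispose of the odd part and the overall argument closes. With that correction the proposal is a faithful summary of the Klain-style proof of Hadwiger's theorem.
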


The functionals $V_0,\ldots, V_n$ are the intrinsic volumes. They are fundamental in geometric tomography and convex geometry since they carry important geometric information. For example, $V_n(K)$, $V_{n-1}(K)$, and $V_1(K)$ are, up to normalization, the volume, the surface area, and the mean width of $K$, respectively. Moreover, $V_0$ denotes the Euler characteristic. Hadwiger's theorem not only reveals the very basic character of intrinsic volumes, but also provides effortless proofs of numerous results in integral geometry and geometric probability (see e.g. \cite{16}).

Hadwiger's characterization theorem was also the starting point for many results in the modern theory of valuations. For instance, in his landmark work \cite{Alesker01}, Alesker obtained a complete classification of continuous and merely translation invariant valuations, thereby confirming in a much stronger form a conjecture by McMullen. Alesker's result led to the discovery of a rich algebraic structure for valuations which in turn laid the foundation for a new theory of algebraic integral geometry (see e.g. \cite{Alesker99,alesker07,Alesker03,AleBerSchu,bernig07,BerFu10,bernig07a,fu06, Ale10}).

An affine version of Hadwiger's theorem was established by Ludwig and Reitzner \cite{Ludwig:Reitzner}. They proved a long sought-after classification of upper semicontinuous valuations which are invariant under volume preserving maps. It turned out that each such valuation is a linear combination of the Euler characteristic, the volume, and the affine surface area. The latter has its origins in affine differential geometry and found applications in such diverse fields as approximation theory and image analysis (see e.g. \cite{Andrews96,Sap06,Gruber93c}). Moreover, the upper semicontinuity of the affine surface area (a long conjectured property, finally established in \cite{L7}) was crucial for the solution of the affine Plateau problem \cite{172}.

Hadwiger type theorems were established also in various other contexts (see e.g. \cite{bernig08,Klain97,Klain96,Lud11,Lud10,Lud12}). Of particular importance are classification theorems of body valued valuations which are compatible with the general linear group. Due to Ludwig's seminal work \cite{Lud06ajm,Lud03,Lud02advproj,Lud11,Lud05,Lud10} in this direction we now know which notions are indeed fundamental in the affine geometry of convex bodies (see also \cite{Hab11,Hab10amj,Haberlind,HL06,Schduke,schuster08,Par10a,SchWan10,SchPar,AbaBer11}).

In \cite{LR10}, Ludwig and Reitzner asked for a centro-affine version of Hadwiger's theorem. In particular, they posed the question of classifying all $\SLn$-invariant valuations on the space $\cK_o^n$ of convex bodies containing the origin in their interiors. While they obtained such classifications under additional assumptions such as homogeneity or preassigned values on polytopes, the general case remained open. 
Our main result provides a complete answer to Ludwig and Reitzner's question in the upper semicontinuous case.

\begin{theorem}\th\label{main1}
Let $n \geq 2$.
A map $\mu:\cK_o^n\to\R$ is an upper semicontinuous and $\SLn$-invariant valuation if and only if there exist constants $c_0,c_1, c_2\in\R$ and a function $\varphi\in\textnormal{Conc}(\R_+)$ such that
\[\mu (K)=c_0V_0(K)+c_1 V_n(K)+c_2V_n(K^*)+\Omega_{\varphi} (K)\]
for all $K\in\cK_o^n$.
\end{theorem}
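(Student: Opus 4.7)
The ``if'' direction is essentially routine: the Euler characteristic $V_0$, the volume $V_n$, the polar volume $K \mapsto V_n(K^*)$, and the Orlicz surface area $\Omega_\varphi$ are each an upper semicontinuous and $\SLn$-invariant valuation on $\cK_o^n$. Invariance of the polar volume follows from $(gK)^* = g^{-T}K^*$ for $g \in \SLn$, while the nontrivial valuation and upper semicontinuity properties of $\Omega_\varphi$ are established in the cited literature on Orlicz surface areas. The substantive content of the theorem is the converse.

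For the forward direction, the plan is first to classify $\mu$ on the dense subclass $\cP_o^n$ of polytopes containing the origin in their interiors, and then to extend the representation to all of $\cK_o^n$ via upper semicontinuity. On polytopes, I would exploit the simplicial decomposition: every $P \in \cP_o^n$ admits a triangulation into \emph{origin-cones} $T_i := \conv(\{0\} \cup G_i)$, where $G_i$ is an $(n-1)$-simplex on $\partial P$. Although each $T_i$ fails to lie in $\cK_o^n$, it arises as a Hausdorff limit of bodies in $\cK_o^n$, so upper semicontinuity allows one to assign to $T_i$ a well-defined ``simplex value.'' Since $\SLn$ acts transitively on origin-cones of fixed volume, this value depends only on the single parameter $t := \vol(T_i)$, producing a function $f\colon (0,\infty) \to \R$. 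Applying the valuation identity to suitably chosen polytopes $P, P' \in \cP_o^n$ whose union and intersection also lie in $\cP_o^n$ then forces a functional equation on $f$ relating its values at different arguments.

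The main obstacle lies in two intertwined issues. First, one must verify that the simplex value is well-defined and independent of the approximating sequence, through an explicit inclusion--exclusion argument on pairs of adjacent origin-cones sharing a common facet; this is delicate precisely because the domain of $\mu$ excludes all bodies with origin on the boundary. Second, the resulting functional equation must be solved subject to upper semicontinuity. Its general solution turns out to be a three-parameter family of homogeneous solutions---of dilation degrees $0$, $n$, and $-n$, corresponding respectively to $V_0$, $V_n$, and $K \mapsto V_n(K^*)$---together with a one-parameter family indexed by concave functions $\varphi \in \textnormal{Conc}(\R_+)$, corresponding to $\Omega_\varphi$. Concavity of $\varphi$ should be forced by upper semicontinuity via a Jensen-type averaging of $\mu$ over $\SLn$-images of a fixed body whose facets have been ``split'' into coplanar pieces.

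Finally, to pass from $\cP_o^n$ back to $\cK_o^n$: for an arbitrary $K \in \cK_o^n$ and any sequence $P_k \in \cP_o^n$ with $P_k \to K$ in the Hausdorff metric, the identity established on $\cP_o^n$ together with the upper semicontinuity of both sides---and the matching approximation properties of $\Omega_\varphi$ on polytope sequences---yields the claimed representation on all of $\cK_o^n$, completing the proof.
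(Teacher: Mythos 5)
Your high-level plan is correct in outline — classify $\mu$ on $\cP_o^n$ first, then handle $\cK_o^n$ — and the ``if'' direction is indeed routine. But there are two genuine gaps, the second of which is fatal to the approach as stated.

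\textbf{The ``simplex value'' construction is not well-defined by upper semicontinuity.} Upper semicontinuity gives only the one-sided inequality $\limsup_k \mu(K_k) \le \mu(K)$; it does not produce a limit along approximating sequences, and the sets $T_i = \conv(\{0\}\cup G_i)$ lie on the boundary of $\cK_o^n$ rather than in it. There is no canonical value of $\mu$ at an origin-cone, and no inclusion--exclusion argument can recover the valuation from such phantom values without an additivity identity that $\mu$ is not assumed to satisfy off its domain. The paper instead reduces to the class $\cR^n$ of polytopes with a single vertex in each open halfspace of a coordinate hyperplane, solves Cauchy-type functional equations using measurability, and invokes Ludwig's extension theorem (Theorem~\ref{Ludwig Extension}): a valuation on $\cP_o^n$ vanishing on $\cR^n$ vanishes identically. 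This sidesteps the need for any boundary value.

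\textbf{The passage from $\cP_o^n$ to $\cK_o^n$ cannot be an approximation argument.} The Orlicz surface areas $\Omega_\varphi$ vanish identically on $\cP_o^n$ (polytope boundaries have zero Gaussian curvature $\cH^{n-1}$-a.e.), but $\Omega_\varphi$ is strictly positive on, say, the ball. So the value $\mu(K)$ for general $K\in\cK_o^n$ is \emph{not} determined by the values $\mu(P_k)$ for approximating polytopes $P_k\to K$; indeed the whole point of the theorem is the appearance of a genuinely new, infinite-dimensional family on $\cK_o^n$ that is invisible on polytopes. Your statement that the polytope identity ``together with upper semicontinuity $\ldots$ and the matching approximation properties of $\Omega_\varphi$ on polytope sequences yields the claimed representation'' cannot go through, precisely because there are no such matching approximation properties: $\lim_k \Omega_\varphi(P_k)=0\ne\Omega_\varphi(K)$ in general. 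The correct and nontrivial ingredient here is the Ludwig--Reitzner characterization (Theorem~\ref{Ludwig Reitzner Orlicz}): an upper semicontinuous, $\SLn$-invariant valuation on $\cK_o^n$ that \emph{vanishes on $\cP_o^n$} is necessarily some $\Omega_\varphi$. The paper's argument is then short: by the polytope classification (Theorem~\ref{main2}), $\nu := \mu - c_0 V_0 - c_1 V_n - c_2 V_n(\,\cdot^*\,)$ vanishes on $\cP_o^n$, and Theorem~\ref{Ludwig Reitzner Orlicz} forces $\nu = \Omega_\varphi$. You would need to import that result (or reprove it, which is itself a substantial piece of work) to close the gap.

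As a smaller point, your claim that ``$\SLn$ acts transitively on origin-cones of fixed volume'' suggests a single-parameter classification on polytopes, but the correct polytope classification has three parameters ($V_0$, $V_n$, $V_n(\cdot^*)$), and the polar volume is not a function of a single origin-cone volume. The degree $-n$ piece emerges from the valuation functional equations (via Ludwig's moment vector theorem, Theorem~\ref{Ludwig moment}), not from a volume parameter of a triangulation.
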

Here, $V_n(K^*)$ denotes the volume of the polar body of $K$ and the $\Omega_{\varphi}$'s are Orlicz surface areas (see Section \ref{prelim} for details). One of the major open problems in convex geometric analysis is closely related to the quantity $V_n(K^*)$. Indeed, Mahler's conjecture asks for the optimal lower bound of $V_n(K^*)$ among all bodies $K\in\cK_o^n$ of volume one. 

The Orlicz surface areas $\Omega_{\varphi}$ were discovered only recently in \cite{LR10} as part of a new Orlicz Brunn-Minkowski theory. In the following we briefly describe the evolution of this new theory and its applications to information theory and Sobolev inequalities.

The classical Brunn-Minkowski theory is the outcome of merging two elementary notions for subsets of $\R^n$: vector addition and volume. In \cite{Lut96,Lut93b}, Lutwak combined Firey's $L_p$ addition for convex bodies with volume and obtained an $L_p$ extension of the Brunn-Minkowski theory. This $L_p$ Brunn-Minkowski theory evolved enormously over the last years and became a major part of modern convex geometric analysis (see e.g. \cite{ChoWan06, Gardner:Giannopoulos, gZ8,HS09jdg,HS09jfa,
Lud05, LR10, Lo12, LutYanZha00duke, LutYanZha00jdg, LutYanZha06,
LutZha97jdg, 148, SchWer04,
Sta02, Sta03,WerYeAdv,WerInd07,YasYas06,WerYe09,CamGro02adv,BorLutYanZha}). 

The natural objects to be studied in this $L_p$ Brunn-Minkowski theory are convex bodies containing the origin in their interiors, i.e. elements of $\cK_o^n$. Of particular interest in this context are functionals defined on $\cK_o^n$ which are $\SLn$-invariant. They give rise to affine isoperimetric inequalities which turn out to be much stronger than their counterparts of a more Euclidean flavor. This geometric insight has recently been used to establish  new affine analytic inequalities. Examples include affine versions of the sharp $L_p$ Sobolev, the Moser-Trudinger, the Morrey-Sobolev inequality, and the Fisher information inequality (see e.g. \cite{CLYZ,HSX,LutYanZha02jdg,LutLvYanZha09c}). Remarkably, these new affine inequalities strengthen and directly imply their classical predecessors.

Recently, the next step in the evolution of the Brunn-Minkowski theory towards an Orlicz Brunn-Minkowski theory has been made (see e.g. \cite{HabLutYanZha09,LutYanZha09badv,LutYanZha09jdg,Lud09,LR10}). The functionals $\Omega_{\varphi}$ from \th\ref{main1} constitute far reaching generalizations of the classical affine surface area within this new Orlicz Brunn-Minkowski theory.

The main step in the proof of \th\ref{main1} is to show the following Hadwiger type theorem on the space $\cP_o^n$ of convex polytopes containing the origin in their interiors. This solves an open problem posed in \cite{Lud02advval}.
\begin{theorem}\th\label{main2}
Let $n \geq 2$.
A map $\mu:\cP_o^n\to\R$ is an upper semicontinuous and $\SLn$-invariant valuation if and only if there exist constants $c_0,c_1, c_2\in\R$ such that
\[\mu (P)=c_0V_0(P)+c_1 V_n(P)+c_2V_n(P^*)\]
for all $P\in\cP_o^n$.
\end{theorem}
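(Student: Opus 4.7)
The sufficiency direction is routine. Each of $V_0$, $V_n$, and the polar-volume functional $K \mapsto V_n(K^*)$ is a valuation on $\cP_o^n$ (for the polar volume, the valuation identity rests on the fact that $K^* \cup L^*$ is convex whenever $K \cup L$ is, so that $(K \cap L)^* = K^* \cup L^*$ falls inside the scope of the valuation property of ordinary volume), and each is $\SLn$-invariant (for the polar volume, use $(\phi K)^* = \phi^{-T}K^*$ together with $|\det \phi| = 1$). All three are continuous, hence upper semicontinuous.

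For the necessity, suppose $\mu : \cP_o^n \to \R$ is an upper semicontinuous $\SLn$-invariant valuation. The first step is to pick constants $c_0, c_1, c_2 \in \R$ so that
\[
\tilde\mu \;:=\; \mu - c_0 V_0 - c_1 V_n - c_2 V_n(\cdot^*)
\]
vanishes on a three-parameter reference family. A convenient choice is the scaled family $\{\lambda T : \lambda > 0\}$ of a fixed simplex $T \in \cP_o^n$ centered at the origin: since $V_0(\lambda T) = 1$, $V_n(\lambda T) = \lambda^n V_n(T)$, and $V_n((\lambda T)^*) = \lambda^{-n} V_n(T^*)$ transform with three distinct homogeneity exponents $0$, $n$, $-n$, three suitably chosen values of $\mu$ on this family determine $c_0, c_1, c_2$ uniquely. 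Hereafter the task is to show $\tilde\mu \equiv 0$.

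The next step reduces this to showing $\tilde\mu \equiv 0$ on simplices. Proceeding by induction on the number of vertices of $P \in \cP_o^n$, one uses the inclusion-exclusion identity
\[
\tilde\mu(P_1 \cup P_2) + \tilde\mu(P_1 \cap P_2) \;=\; \tilde\mu(P_1) + \tilde\mu(P_2)
\]
for dissections $P = P_1 \cup P_2$ whose pieces have strictly fewer vertices than $P$. Because the natural dissections of a polytope by a hyperplane typically push $P_1$, $P_2$, or $P_1 \cap P_2$ onto the boundary of $\cP_o^n$ (the origin slipping onto their topological boundary, or the intersection collapsing to lower dimension), one must combine this identity with a perturbation argument and invoke upper semicontinuity to pass legitimately to the limit.

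The heart of the proof is the classification on simplices. By $\SLn$-invariance, $\tilde\mu$ on simplices descends to a function on the orbit space, parameterized by the unordered barycentric coordinates $\{\lambda_0, \ldots, \lambda_n\}$ of the origin with respect to the vertices (with $\lambda_i > 0$, $\sum_i \lambda_i = 1$) together with $V_n(T)$. Valuation identities obtained from carefully chosen dissections of a simplex---for instance, splitting $T$ along a hyperplane avoiding the origin into two subpolytopes that both lie in $\cP_o^n$ with suitable overlap---yield a functional equation on this $(n+1)$-parameter space, and upper semicontinuity then forces the only admissible solution (after the normalization absorbed into $c_0, c_1, c_2$) to be zero. \textbf{This last step is the main obstacle.} Coaxing the valuation identity into a usable form for simplices demands a delicate approximation scheme, since generic dissections of a simplex in $\cP_o^n$ produce pieces outside $\cP_o^n$; once a functional equation is in hand, extracting rigidity from the weak regularity afforded by upper semicontinuity requires further work. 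That no Orlicz-type term survives here, in contrast with \th\ref{main1}, reflects the absence of nontrivial boundary curvature on polytopes: the functional freedom enjoyed by $\Omega_\varphi$ on $\cK_o^n$ is invisible on $\cP_o^n$.
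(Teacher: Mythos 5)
Your sufficiency direction is fine, and your opening normalization step (absorbing $c_0, c_1, c_2$ by matching $\mu$ on a scaled family $\lambda T$, which works because the three functionals have distinct homogeneity degrees $0, n, -n$) is a reasonable start. But the necessity argument has a fatal structural flaw that your sketch does not resolve, and the paper's proof in fact goes a completely different route.

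The flaw is in the proposed dissection scheme. You suggest ``splitting $T$ along a hyperplane avoiding the origin into two subpolytopes that both lie in $\cP_o^n$.'' This is impossible: if the hyperplane $H$ avoids the origin, then the origin lies in the interior of exactly one of the two halfspaces, so exactly one of $P \cap H^+$, $P \cap H^-$ contains the origin in its interior; the other is not in $\cP_o^n$. If instead $H$ passes through the origin, then the origin lies on the boundary of both pieces, and again neither lies in $\cP_o^n$. This is not a ``delicate approximation'' obstacle that upper semicontinuity will repair; it is the central combinatorial difficulty of the whole problem, and it is precisely why the paper invokes Ludwig's theorem (\th\ref{Ludwig Extension}): an $\SLn$-invariant valuation on $\cP_o^n$ that vanishes on the much smaller family $\cR^n$ (double pyramids over $(n-1)$-dimensional polytopes) vanishes identically. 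The content of that theorem is a careful extension of the valuation to polytopes with the origin on their boundary followed by a dissection argument via elementary moves, and no hand-waving about perturbation substitutes for it. Relatedly, your ``induction on the number of vertices'' is not well-founded: a hyperplane cut typically introduces new vertices on the cut face, so the pieces can have more vertices than the original.

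The paper's actual proof also differs in its global architecture, and the differences are instructive. Rather than reducing to simplices, the paper decomposes $\mu = \mu_+ + \mu_-$ into parts that are even and odd with respect to reflection in a coordinate hyperplane, restricts attention to boxes $\cQ^2$ and double pyramids $\cR^2$ in dimension two, and derives explicit one-variable functional equations (a Cauchy-type equation, solved using measurability of $\mu$) to pin down the form $F(r) = c_1/r + c_2 + c_3 r$ on $\cR^2$. Upper semicontinuity enters at exactly one crucial point, in \th\ref{vanish cQ2 odd}, to force an antisymmetric function vanishing on a dense set to vanish everywhere --- a sharp and isolated use, not a catch-all. Higher dimensions are handled by induction on $n$ together with Ludwig's classification of moment-vector valuations (\th\ref{Ludwig moment}) to kill a residual vector-valued term. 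So the missing ingredients in your sketch are: (i) Ludwig's extension theorem as the replacement for an impossible dissection, (ii) the even/odd splitting and the role of the specific families $\cQ^n$, $\cR^n$, and (iii) a concrete functional equation with a concrete solution rather than an appeal to the vague rigidity of upper semicontinuity.
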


A basic result in the theory of valuations is McMullen's decomposition \cite{McMullen77}. It shows that each continuous and translation invariant valuation on $\cK^n$ is the sum of homogeneous valuations.  \th\ref{main2} reveals that also each upper semicontinuous and $\SLn$-invariant valuation on $\cP_o^n$ splits into homogeneous parts. Therefore,  \th\ref{main2} can be viewed as a centro-affine McMullen decomposition.

  \section{Preliminaries}\label{prelim}
    In this section we fix our notation and collect basic facts from convex geometry.
General references for the theory of convex bodies are the books by Gardner
\cite{Gar95}, Gruber \cite{Gruber:CDG}, and Schneider \cite{Sch93}.

The set of positive real numbers will be denoted by $\R_+$. The solution to Cauchy's functional equation 
	\begin{equation}\label{cauchy}
		f(x+y) = f(x) + f(y),\qquad x,y \in \R_+ ,
	\end{equation}	
will be used several times throughout the paper. It is well known that a Borel measurable
function $f : \R_+ \to \R_+$ which satisfies \eqref{cauchy} has to be linear.

We will work in 
Euclidean $n$-space $\R^n$. Write $e_1, \ldots , e_n$ for the canonical basis vectors 
of $\R^n$. The components of a vector $x \in \R^n$ are denoted by $x_1,\ldots, x_n$. 
The space $\R^n$ will be viewed as equipped with the standard Euclidean scalar product
$\langle \cdot, \cdot \rangle$. Write $B^n$ for the Euclidean unit ball 
$\{x\in\R^n:\,\,\langle x,x \rangle \leq 1\} $ 
and denote by $\kappa_n$ its volume. $S^{n-1}$ stands for the boundary of $B^n$. 
For sets $A_1, \ldots, A_k \subset \R^n$ we write $[A_1, \ldots, A_k]$ for the convex
hull of their union.

Let $\mathcal{S}\subset\cK^n$. A function $\mu : \mathcal{S} \to \R$ is upper semicontinuous if
	$$\limsup_{k\to\infty} \mu(K_k) \leq \mu(K)$$
for each sequence of bodies $K_k \in \mathcal{S}$ converging to $K \in \mathcal{S}$.	
A Borel measurable map $\mu : \mathcal{S} \to \R$ is simply called 
measurable. 

Recall that $\cP_o^n$ stands for convex polytopes containing the origin in their interiors.
Sometimes we will identify the space $\cP_o^{n-1}$ with the set of polytopes in $\cP_o^n$ 
which are contained in $e_n^{\bot}$.
Denote by $\cQ^n$ the set of polytopes of the form $[I_1,\ldots, I_n]$ with intervals $I_k=[-a_ke_k,b_ke_k]$ for $a_k,b_k \in \R_+$.
Write $\cQ^n(x_n)$ for polytopes $[P,-ae_n,be_n]$ where $P \in \cP_o^{n-1}$ and $a,b \in \R_+$.
Furthermore, let $\cR^n(x_n)$ be the set of convex polytopes $[P, u, v]$ where $P\in \cP_o^{n-1}$ 
and $u, v \in \R^n$ are points in the open lower and upper halfspace bounded by $e_n^{\bot}$,
respectively. Finally, denote by $\cR^n$ the set of all $\SLn[n]$ images of elements in $\cR^n(x_n)$.

The following lines illustrate that a valuation $\mu: \cP_o^n \to \R$ is actually determined by its values on the much smaller set $\cR^n$.
Suppose that $j \in \{1,\ldots,n\}$. Denote by $\cP_j^n$ the set of polytopes 
$P\cap H_1^-\cap\ldots\cap H_i^-$ where $i\leq j$, $P\in\cP_o^n$, and $H_1^-,\ldots, H_i^-$ are closed halfspaces bounded by 
hyperplanes $H_1,\ldots, H_i$ through the origin with linearly independent normals. If $\mu$ vanishes on $\cR^n$, then one
can extend $\mu$ inductively to be defined on $\cP_n^n$. Indeed, for $P\in\cP_j^n$ set 
	$$ \mu (P) := \mu [P,u], \qquad u\in H_1\cap\ldots\cap H_{i-1}\cap H_i^+\backslash H_i, $$
where $u$ is sufficiently close to the origin. For this extension, the following weak valuation property holds: If $P\in\cP_n^n$ is a polytope and $H$ is a hyperplane	
such that $P\cap H^-$ and $P\cap H^+$ are both contained in $\cP_n^n$, then
	$$ \mu(P) = \mu(P \cap H^-) +  \mu (P \cap H^+). $$
With the aid of elementary moves \cite{Ludwig:Reitzner_moves}, one can show that the last property forces $\mu$ to vanish on $\cP_o^n$ (see \cite{HL06}). This is the content of the following theorem by Ludwig \cite{Lud02advval}.
\begin{theorem}\th\label{Ludwig Extension}
	Let $n \geq 2$.
	If $\mu: \cP_o^n \to \R$ is a valuation which vanishes on $\cR^n$, then it vanishes on the whole domain $\cP_o^n$.
\end{theorem}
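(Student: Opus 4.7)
The plan is to follow the road-map sketched in the paragraph preceding the statement: extend the vanishing hypothesis from $\cR^n$ outward to all of $\cP_o^n$ through a controlled enlargement of the domain, and then close the argument via elementary moves.

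\textbf{Step 1: Construct and verify the extension to $\cP_n^n$.}
I would first make rigorous the inductive extension of $\mu$ from $\cP_o^n = \cP_0^n$ to $\cP_n^n$ exactly as in the preamble: for $P = P' \cap H_1^- \cap \cdots \cap H_i^- \in \cP_j^n$, set
\[
\mu(P) := \mu([P, u])
\]
for $u \in H_1 \cap \cdots \cap H_{i-1} \cap H_i^+ \setminus H_i$ close enough to the origin that $[P, u] \in \cP_{j-1}^n$. The first task is to show this is well-defined. Given two admissible choices $u, u'$, one compares $[P, u]$ and $[P, u']$ by introducing the hyperplane spanned by $P \cap H_i$ and a suitable third point, and applies the valuation property already known on $\cP_{j-1}^n$ (by the inductive hypothesis); the pieces that appear all lie in $\cR^n$ by the choice of neighborhood of the origin, so the hypothesis on $\cR^n$ feeds into the induction.

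\textbf{Step 2: Weak valuation property on $\cP_n^n$.}
I would next establish: if $P \in \cP_n^n$ and $H$ is a hyperplane with $P \cap H^\pm \in \cP_n^n$, then
\[
\mu(P) = \mu(P \cap H^+) + \mu(P \cap H^-).
\]
Proceeding by induction on the number $i$ of halfspace constraints through the origin in the description of $P$, the claim is rewritten in terms of an auxiliary apex $u$ as
\[
\mu([P, u]) = \mu([P \cap H^+, u]) + \mu([P \cap H^-, u]),
\]
which is the ordinary valuation property of $\mu$ applied to the larger class $\cP_{j-1}^n$, modulo pieces handled by the induction hypothesis. The argument splits into cases depending on whether $H$ separates $u$ from the rest of $P$, occasionally requiring an auxiliary refinement by a hyperplane parallel to $H$ through $u$ to align the two sides.

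\textbf{Step 3: Elementary moves.}
Finally, I would invoke the elementary moves of Ludwig and Reitzner \cite{Ludwig:Reitzner_moves} as implemented in \cite{HL06}. The weak valuation property of Step 2 is precisely what is needed to conclude that $\mu$ is preserved under each elementary move inside $\cP_o^n$. A finite chain of such moves transports any $P \in \cP_o^n$ into a polytope in $\cR^n$, where $\mu$ vanishes by hypothesis; hence $\mu(P) = 0$ for all $P \in \cP_o^n$.

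The main obstacle I anticipate is Step 2. Verifying the weak valuation identity demands careful case analysis depending on the mutual position of the cutting hyperplane $H$, the halfspaces $H_1^-, \ldots, H_i^-$ defining $P$, and the chosen apex $u$; at every intermediate stage one must confirm that the auxiliary polytopes still lie in the appropriate class $\cP_k^n$ so that the extension formula of Step 1 is legitimately applied. Once this bookkeeping is under control, Steps 1 and 3 are essentially mechanical.
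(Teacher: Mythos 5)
Your proposal reproduces the road-map that the paper itself sketches in the paragraph immediately preceding the theorem statement; the paper does not give its own proof but attributes the result to Ludwig \cite{Lud02advval} and refers to \cite{HL06} for the extension and elementary-moves argument, so your plan coincides with the paper's presentation. The one thing to correct is your description of Step~3: the elementary moves of \cite{Ludwig:Reitzner_moves} do not transform $P$ itself into a member of $\cR^n$; they are local moves connecting different triangulations of the same fixed polytope. What the weak valuation property from Step~2 buys is that the sum of (extended) $\mu$-values over the cells of a triangulation is unchanged under each such move and hence independent of the triangulation; one then concludes $\mu(P)=0$ by computing this sum from a triangulation whose cells, after introducing the auxiliary apex points from Step~1, all reduce to pieces in $\cR^n$ on which $\mu$ vanishes.
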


For $k \in \{1,\ldots,n\}$ define a linear map $\phi_k \in \GLn[n]$ by
	$$\phi_k e_k = -e_k, \qquad \phi_k e_j = e_j \quad\textnormal{ for }\quad j\in\{1,\ldots,n\}\backslash\{k\}.$$
Let $\mu : \cP_o^n \to \R$ be given. We call $\mu$ even with respect to the reflection at a coordinate hyperplane 
if there exists a $k\in\{1,\ldots,n\}$ such that $\mu(P)=\mu(\phi_k P)$ for all $P\in\cP_o^n$. We 
say that $\mu$ is odd with respect to the reflection at a coordinate hyperplane if there exists 
a $k\in\{1,\ldots,n\}$ such that $\mu(P)=-\mu(\phi_k P)$ for all $P\in\cP_o^n$. Note that if $\mu$ 
is also supposed to be $\SLn[n]$-invariant, then $\mu$ is even (odd) with respect to the reflection at 
a coordinate hyperplane if and only if it is even (odd) with respect to the reflections at all 
coordinate hyperplanes. Set
		$$\mu_+(P)=\tfrac12 \left[ \mu(P) + \mu(\phi_k P)\right]\quad\textnormal{and}\quad
			\mu_-(P)=\tfrac12 \left[ \mu(P) - \mu(\phi_k P)\right].$$
Clearly, $\mu_+$ is even with respect to the reflection at a coordinate hyperplane, 
$\mu_-$ is odd with respect to the reflection at a coordinate hyperplane, and 
$\mu=\mu_+ + \mu_-$. Note that in $\R^1$ these definitions correspond to the usual notion
of even and odd maps defined on $\cP_o^1$. We will use obvious adaptions of the above concepts for maps 
with domains other than $\cP_o^n$.

The polar body of a convex body $K \in \cK_o^{n}$ is defined by
	$$ K^* = \{x\in\R^n:\,\,\langle x, y\rangle \leq 1\textnormal{ for all } y\in K\}.$$
It follows immediately from the definition of polarity that
\begin{equation}\label{eq: polar product}
	[P,-ae_n,be_n]^*= P^* \times [-\tfrac1a e_n, \tfrac1b e_n]
\end{equation}
for $P \in \cP_o^{n-1}$ and $a,b \in \R_+$, where $P^*$ denotes the polar of $P$ in $\R^{n-1}$. Associated with the polar body of a polytope is its moment vector
$$m^*(P) = \int_{P^*} x\,dx.$$
It is easily seen that $m^*$ is a vector valued valuation which is compatible with the general
linear group. Moreover, Ludwig \cite{Ludwig:moment} proved the following classification result.
\begin{theorem}\th\label{Ludwig moment}
	Let $n \geq 2$. If $\nu : \cP_o^n \to \R^n$ is a measurable valuation which satisfies
		$$ \nu (\phi P) = |\det \phi^{-t}| \phi^{-t} \nu (P)$$
	for all $P \in \cP_o^n$ and each $\phi \in \GLn$, then there exists a constant $c \in \R$ such that
		$$ \nu (P) = 	c m^*(P) $$
	for all	$P \in \cP_o^n$.
\end{theorem}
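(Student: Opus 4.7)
I would subtract off a suitable scalar multiple of $m^*$ from $\nu$ and then apply Ludwig's extension theorem \th\ref{Ludwig Extension} componentwise. First, a direct check shows that $m^*$ itself satisfies the required covariance: since $(\phi P)^*=\phi^{-t}P^*$, the change of variables $x=\phi^{-t}y$ yields $m^*(\phi P)=|\det\phi^{-t}|\,\phi^{-t}\,m^*(P)$. Thus for every $c\in\R$, the map $\widetilde{\nu}:=\nu-c\,m^*$ is a measurable $\R^n$-valued valuation obeying the same $\GLn$-covariance.

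The aim becomes the choice of $c$ for which $\widetilde{\nu}$ vanishes on $\cR^n$; applying \th\ref{Ludwig Extension} to each component (the theorem assumes only the valuation property, no invariance) would then give $\widetilde{\nu}\equiv 0$ on $\cP_o^n$. Since $\cR^n$ is the $\SLn$-orbit of $\cR^n(x_n)$ and $\widetilde{\nu}(\phi Q)=\phi^{-t}\widetilde{\nu}(Q)$ with $\phi^{-t}$ invertible for $\phi\in\SLn$, it suffices to prove $\widetilde{\nu}=0$ on $\cR^n(x_n)$. For $Q=[P,u,v]\in\cR^n(x_n)$, the unimodular shears $x\mapsto x+x_n w$ with $w\in e_n^{\perp}$ fix $P$ pointwise and shift the $e_n^{\perp}$-projections of $u,v$; together with linear maps inside $e_n^{\perp}$ and valuation dissections along hyperplanes through the $e_n$-axis, this normalizes the problem to the axis-based polytopes $[P,-ae_n,be_n]\in\cQ^n(x_n)$, on which the polar product formula \eqref{eq: polar product} makes $m^*$ completely explicit.

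On this normalized subclass, the coordinate reflections $\phi_k\in\GLn$ (with $|\det\phi_k|=1$ and $\phi_k^{-t}=\phi_k$) force the $k$-th component of $\widetilde{\nu}$ to vanish whenever the polytope is $\phi_k$-invariant, so on bases $P$ symmetric in each of $e_1^{\perp},\ldots,e_{n-1}^{\perp}$ only the $e_n$-component of $\widetilde{\nu}$ remains. Matching $\widetilde{\nu}$ with $c\,m^*$ at a single such reflection-symmetric polytope fixes the constant $c$, and varying the axis parameters $a,b$ then produces a Cauchy-type functional equation \eqref{cauchy} whose only measurable solutions are linear, closing the argument on the normalized subclass. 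I expect the main obstacle to be the final propagation step: lifting the vanishing of $\widetilde{\nu}$ from the highly symmetric axis polytopes back to the full $\cR^n(x_n)$ with arbitrary base $P\in\cP_o^{n-1}$ and arbitrary apex positions $u,v$, which will require careful bookkeeping of valuation dissections combined with the weak valuation identity on $\cP_n^n$ discussed before \th\ref{Ludwig Extension}.
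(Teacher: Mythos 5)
The paper does not prove this statement at all: it is Ludwig's theorem on moment vectors, imported verbatim from \cite{Ludwig:moment}, so there is no in-paper argument to compare against. Your outline is therefore a reconstruction of an external result, and while its skeleton is sensible (verify the covariance of $m^*$, subtract $c\,m^*$, reduce via $\SLn$-covariance to $\cR^n(x_n)$, and finish with \th\ref{Ludwig Extension} applied componentwise), the central step is not actually carried out and one of the reductions you propose does not work as described.

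Concretely, the shear $x \mapsto x + x_n w$ with $w \in e_n^{\perp}$ sends $u = (u', u_n)$ to $(u' + u_n w, u_n)$ and $v = (v', v_n)$ to $(v' + v_n w, v_n)$; since $u_n < 0 < v_n$ for $[P,u,v] \in \cR^n(x_n)$, the projections of $u$ and $v$ onto $e_n^\perp$ move in \emph{opposite} directions, so no choice of $w$ puts both apexes on the $e_n$-axis simultaneously. Thus the claim that shears together with linear maps of $e_n^\perp$ normalize $\cR^n(x_n)$ to $\cQ^n(x_n)$ is false as stated; the passage from general apexes $u,v$ to axis-based ones is exactly where the real work lies, and you flag it as an expected obstacle without bridging it. Likewise, the promised Cauchy equation ``from varying $a,b$'' is misattributed: \th\ref{class 1-dim even} and \th\ref{class 1-dim odd} produce additive splittings in $a$ and $b$, not an equation of the form \eqref{cauchy}; in the analogous parts of this paper (e.g.\ the proof of \th\ref{vanish cRn}) the Cauchy equation instead comes from the $(x,y)$-dependence of the apex coordinates after the shear normalization, precisely the part you have not controlled. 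Your observation that a coordinate reflection $\phi_k$ forces the $k$-th component of $\widetilde\nu$ to vanish on $\phi_k$-symmetric polytopes is correct, and the reduction from $\cR^n$ to $\cR^n(x_n)$ via $\SLn$-covariance is also correct, but as it stands the proposal is a plan with a hole in the middle rather than a proof.
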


Next, let us describe the Orlicz surface areas $\Omega_{\varphi}$ in detail. Write $\cH^{n-1}$ for $(n-1)$-dimensional Hausdorff measure in $\R^n$ and suppose that $K\in\cK_o^n$. We denote by $\partial K$ the boundary of $K$. For $\cH^{n-1}$ almost all boundary points $x\in\partial K$ there exists the generalized Gaussian curvature $\kappa(K,x)$ of $\partial K$ at $x$ and a unique outward unit normal $u(K,x)$ of $K$ at $x$. The cone measure $\mu_K$ of $K$ can therefore be defined by 
	$$d\mu_K(x) = \langle x, u(K,x)\rangle\,d\cH^{n-1}(x).$$
Moreover, for each $x \in \partial K$ set
  $$\kappa_0(K,x) = \frac{\kappa(K,x)}{\langle x, u(K,x)\rangle ^{n+1}}.$$
Note that $\kappa_0(K,x)$ is, up
to a constant, just a power of the volume of the origin-centered ellipsoid
osculating $K$ at $x$. The set $\textnormal{Conc}(\R_+)$ consists of all concave functions 
$\varphi:\R_+\to[0,\infty]$ with $\lim_{t\to 0}\varphi(t)=\lim_{t\to \infty}\varphi(t)/t=0$. 
Each such $\varphi$ gives rise to an Orlicz affine surface area
\[\Omega_{\varphi}(K)=\int_{\partial K}\varphi(\kappa_0(K,x))\,d\mu_K(x).\]
Here, we additionally set $\varphi(0) = 0$.
For $p\in\R_+$, the special choice $\varphi(t)=t^{\frac{p}{n+p}}$ in the above definition gives rise to the $L_p$ affine surface areas
$\Omega_p$. The latter objects lie at the very core of the $L_p$ Brunn-Minkowski theory. Moreover, $\Omega:= \Omega_1$ is
the classical affine surface area. We conclude this section with the following characterization result by Ludwig and Reitzner \cite{LR10}.
\begin{theorem}\th\label{Ludwig Reitzner Orlicz}
	Let $n \geq 2$.
	A map $\mu:\cK_o^n\to\R$ is an upper semicontinuous and $\SLn$-invariant valuation which vanishes on $\cP_o^n$ if and only if there exists a function $\varphi\in\textnormal{Conc}(\R_+)$ such that
		\[\mu (K)=\Omega_{\varphi} (K)\]
	for all $K\in\cK_o^n$.
\end{theorem}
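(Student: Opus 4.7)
The forward direction requires verifying that each $\Omega_\varphi$ has the four listed properties. The valuation property and $\SLn$-invariance follow from standard changes of variables on the boundary: for $\phi\in\SLn$ one checks that $\kappa_0(\phi K,\phi x)=\kappa_0(K,x)$ and that the cone measure pushes forward correctly under $\phi$, so the integrand and measure transform inversely and the integral is preserved. Vanishing on $\cP_o^n$ is immediate from $\varphi(0)=0$ together with $\kappa(P,\cdot)=0$ holding $\cH^{n-1}$-almost everywhere on $\partial P$. Upper semicontinuity is the delicate step: for the classical choice $\varphi(t)=t^{1/(n+1)}$ it was established in \cite{L7}, and its extension to arbitrary $\varphi\in\textnormal{Conc}(\R_+)$ is furnished by \cite{LR10}.

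For the converse, let $\mu$ be u.s.c., $\SLn$-invariant, and vanishing on $\cP_o^n$. Since polytopes are dense in $\cK_o^n$, upper semicontinuity already forces $\mu\geq 0$. My plan is to extract a function $\varphi$ by evaluating $\mu$ on a canonical one-parameter family of test bodies and then to show $\mu=\Omega_\varphi$. Concretely, I would use a family of $C^2_+$ bodies $E_t$ that agree with a fixed polytope outside a small boundary neighborhood and whose origin-centered osculating ellipsoid there has $\kappa_0\equiv t$; then define $\varphi(t)$ so that $\mu(E_t)$ matches $\Omega_\varphi(E_t)$ in the localized limit. Upper semicontinuity at the polyhedral limit pins $\varphi$ down, and the difference $\nu:=\mu-\Omega_\varphi$ becomes a u.s.c., $\SLn$-invariant valuation which vanishes on $\cP_o^n$ and on this test family.

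The heart of the argument is showing that $\nu$ vanishes on all of $\cK_o^n$, which amounts to proving a local integral representation $\mu(K) = \int_{\partial K}\varphi(\kappa_0(K,x))\,d\mu_K(x)$ for $C^2_+$ bodies $K$. To get this, I would localize the valuation by decomposing $\partial K$ into small patches realized as $\SLn$-images of simplicial caps (in the spirit of the elementary moves of \cite{Ludwig:Reitzner_moves}) and use vanishing on $\cP_o^n$ to argue that each patch contributes only through its second-order geometry at a representative point. $\SLn$-invariance then reduces that second-order data to the single invariant $\kappa_0$, because the origin-centered osculating ellipsoid at $x\in\partial K$ is an $\SLn$-complete invariant up to its $\kappa_0$-value. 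Once the representation is in place, concavity of $\varphi$ follows from the valuation identity applied to carefully chosen pairs of bodies, and the limit conditions $\lim_{t\to 0}\varphi(t)=0$ and $\lim_{t\to\infty}\varphi(t)/t=0$ follow from upper semicontinuity at degenerate bodies (flat caps and highly peaked caps, respectively). The extension from $C^2_+$ bodies to all of $\cK_o^n$ is then a soft consequence of u.s.c.\ and polytopal approximation. I expect the localization-plus-representation step to be the principal obstacle; everything else is forced by the valuation identity and upper semicontinuity once that representation is in hand.
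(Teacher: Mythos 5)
The paper does not prove this theorem: it is quoted verbatim as a known result of Ludwig and Reitzner from \cite{LR10} (note the sentence ``We conclude this section with the following characterization result by Ludwig and Reitzner''), and the rest of the paper merely invokes it in the deduction of \th\ref{main1} from \th\ref{main2}. So there is no proof in the paper to compare against; what you have written is a reconstruction attempt of the substantial Ludwig--Reitzner argument itself.

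As a reconstruction, your sketch identifies the right ingredients in the right order, but it does not constitute a proof. The forward direction is essentially fine, modulo the slight circularity of citing \cite{LR10} for the upper semicontinuity of $\Omega_\varphi$ while attempting to prove a theorem from that same paper. In the converse direction, the observation that $\mu\geq 0$ follows from u.s.c.\ together with vanishing on the dense class $\cP_o^n$ is correct and is indeed the starting point. But the heart of the matter --- what you call the ``localization-plus-representation step'' leading to the integral formula $\mu(K)=\int_{\partial K}\varphi(\kappa_0(K,x))\,d\mu_K(x)$ --- is the entire content of a long and technical Annals paper, and your proposal only gestures at it: it names a strategy (decomposing $\partial K$ into $\SLn$-images of simplicial caps, reducing second-order data to $\kappa_0$) without carrying out the normalization lemmas, the approximation and compactness arguments, the verification that the extracted $\varphi$ is concave with the required limiting behaviour, or the passage from $C^2_+$ bodies to general $K\in\cK_o^n$. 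You explicitly acknowledge this by calling that step ``the principal obstacle,'' which is a correct diagnosis but leaves the proof open. There is also a small imprecision: the origin-centred osculating ellipsoid is not itself an $\SLn$-invariant; rather, its volume (equivalently $\kappa_0$) is the complete $\SLn$-invariant of the second-order data at a boundary point, and this distinction matters when one tries to run the normalization rigorously. In short, your proposal is a plausible high-level plan for the Ludwig--Reitzner theorem, but the decisive localization and representation arguments are missing, and the paper under review neither supplies them nor is expected to, since it treats the statement as an external input.
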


	\section{Proof of the Main Results}
In this section, we will first prove \th\ref{main2} for all dimensions $n\geq 2$. It will be necessary to treat 
the one dimensional and the other two cases $n=2$ and $n\geq 3$ separately. Afterwards, it will be shown how \th\ref{main1} follows from \th\ref{main2}.
\subsection{The $1$-dimensional Case}

This subsection provides a description of valuations which are defined on one dimensional polytopes containing the origin in their interiors. We start with the two special cases where the valuation in question is assumed to be even and odd, respectively. 
\begin{lemma}\th\label{class 1-dim even}
	If $\mu \colon \cP^1_{o} \to \R$ is an even valuation, then
		$$ \mu[-a,b] = \tfrac 1 2\, \mu[-a,a] + \tfrac 1 2\, \mu[-b,b] $$
	for all $a,b > 0$.
\end{lemma}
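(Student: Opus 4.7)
The proof should be quite short and rely on just two ingredients: the valuation identity and the evenness hypothesis. The plan is to cover the interval $[-a,b]$ by itself and its reflection, and read off the desired formula from the valuation property.

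Concretely, I would argue as follows. The case $a=b$ is trivial, and by evenness the statement is symmetric in $a$ and $b$ (since $\mu[-a,b]=\mu[-b,a]$), so I may assume $a<b$. Then I set $K=[-a,b]$ and $L=[-b,a]$. Both $K$ and $L$ are elements of $\cP_o^1$ since they contain the origin in their interiors. Because $-b<-a<0<a<b$, the two intervals overlap, and one computes
\[
K\cup L=[-b,b],\qquad K\cap L=[-a,a],
\]
both of which again lie in $\cP_o^1$. The valuation property therefore applies and yields
\[
\mu[-b,b]+\mu[-a,a]=\mu[-a,b]+\mu[-b,a].
\]
The evenness assumption means $\mu[-b,a]=\mu[-a,b]$, so the right hand side equals $2\mu[-a,b]$, and dividing by $2$ gives the claim.

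There is no real obstacle here; the only thing to watch is making sure that the four intervals entering the valuation identity all lie in $\cP_o^1$, which follows immediately from $a,b>0$ and $a<b$. Neither upper semicontinuity nor any invariance beyond evenness is needed for this step.
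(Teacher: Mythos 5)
Your proof is correct and is essentially the same argument as the paper's: apply the valuation identity to $[-a,b]$ and its reflection $[-b,a]$, whose union and intersection are $[-b,b]$ and $[-a,a]$, then use evenness to merge the two terms on the right. The paper simply writes out the identity for the two cases $a\geq b$ and $a<b$ instead of invoking symmetry to reduce to one case, but the content is identical.
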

\begin{proof}
  Let $a,b>0$. The valuation property of $\mu$ implies
		\begin{align*}
	     \mu[-a,a] + \mu[-b,b] = \mu[-a,b] + \mu[-b,a] &\quad\textnormal{for}\quad  a \geq b,\\
			 \mu[-b,b] + \mu[-a,a] = \mu[-b,a] + \mu[-a,b] &\quad\textnormal{for}\quad  a < b.		
    \end{align*}
	But since $\mu$ is even, we obtain for all $a,b > 0$ that
		$$ \mu[-a,a] + \mu[-b,b] = 2 \mu[-a,b] .$$
	The assertion of the lemma follows immediately.
\end{proof}

\begin{lemma}\th\label{class 1-dim odd}
	If $\mu \colon \cP^1_{o} \to \R$ is an odd valuation, then
		$$ \mu[-a,b] = \mu[-1,b] - \mu[-1,a]$$
	for all $a,b > 0$.
\end{lemma}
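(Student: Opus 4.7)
The plan is to mimic the structure of Lemma 2.3 (the even case): exploit the single nontrivial valuation identity available on $\cP_o^1$ together with the symmetry hypothesis. The only coordinate reflection in dimension one is $\phi_1 x = -x$, so oddness reads
\[
\mu[-a,b] = -\mu[-b,a] \quad\text{for all } a,b>0.
\]
Specializing to $a=b$ gives $\mu[-a,a]=-\mu[-a,a]$, hence $\mu[-a,a]=0$ for every $a>0$. I will combine this vanishing with the inclusion-exclusion identity applied to a well-chosen pair of intervals.

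The valuation axiom for two intervals $K,L\in\cP_o^1$ always applies, because $K\cup L$ and $K\cap L$ both contain the origin whenever $K$ and $L$ do. The key case distinction is whether $a,b$ lie on the same side of $1$ or on opposite sides. First I would treat the case where $a,b$ are both $\geq 1$ or both $\leq 1$ by considering the pair $K=[-a,1]$ and $L=[-1,b]$. A direct computation of $K\cup L$ and $K\cap L$ shows that in both of these subcases, $\{K\cup L,K\cap L\} = \{[-a,b],[-1,1]\}$ as unordered pairs, so
\[
\mu[-a,1]+\mu[-1,b] \;=\; \mu[-a,b]+\mu[-1,1] \;=\; \mu[-a,b],
\]
and then oddness $\mu[-a,1]=-\mu[-1,a]$ gives the claim. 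Second, for the case $a\leq 1\leq b$ or $b\leq 1\leq a$ the previous pair yields only a tautology, so I instead apply the valuation identity to $K=[-a,b]$ and $L=[-1,1]$. Here the roles are reversed: $\{K\cup L, K\cap L\}=\{[-a,1],[-1,b]\}$ (again as unordered pairs), and the identity together with $\mu[-1,1]=0$ and oddness delivers the same formula $\mu[-a,b]=\mu[-1,b]-\mu[-1,a]$.

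There is no real obstacle here; the only subtlety is simply bookkeeping, namely verifying in each subcase that the union and intersection produce the two intervals we want, so that the identity is nontrivial. Once the correct pair of intervals is chosen in each subcase, the conclusion follows by a one-line application of the valuation property, the vanishing $\mu[-1,1]=0$, and the oddness relation $\mu[-a,1]=-\mu[-1,a]$.
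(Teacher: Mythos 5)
Your proposal is correct and follows essentially the same route as the paper: the same fourfold case split on whether $a$ and $b$ lie above or below $1$, the same valuation identities (your pairs $K,L$ produce exactly the four equations the paper writes down), and the same use of oddness to simplify. The only difference is cosmetic—you isolate the observation $\mu[-1,1]=0$ and make explicit which pair of intervals to feed into the valuation axiom in each subcase, whereas the paper just lists the four resulting identities directly.
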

\begin{proof}
	The valuation property of $\mu$ implies
  \begin{align*}
	  \mu[-a,b] + \mu[-1,1] = \mu[-a,1] + \mu[-1,b] &\quad\textnormal{for}\quad  a,b \geq 1,\\
    \mu[-a,1] + \mu[-1,b] = \mu[-a,b] + \mu[-1,1] &\quad\textnormal{for}\quad  a \geq 1\textnormal{ and } 1 > b > 0,\\
    \mu[-1,b] + \mu[-a,1] = \mu[-1,1] + \mu[-a,b] &\quad\textnormal{for}\quad  b \geq 1\textnormal{ and } 1 > a > 0,\\
    \mu[-1,1] + \mu[-a,b] = \mu[-1,b] + \mu[-a,1] &\quad\textnormal{for}\quad  1 > a,b > 0.
  \end{align*}
	But since $\mu$ is odd, we get in all of the above cases
		$$ \mu[-a,b] = \mu[-1,b] - \mu[-1,a].$$
\end{proof}
Let $\mu:\cP_o^1\to\R$ be a valuation. As was shown in the last section, $\mu$ can be written as the sum of an even and an odd valuation. From \th\ref{class 1-dim even} and \th\ref{class 1-dim odd} we therefore obtain the following theorem.
\begin{theorem}\th\label{class 1-dim}
	If $\mu \colon \cP^1_{o} \to \R$ is a valuation, then
		$$ \mu[-a,b] = \tfrac 1 2 \mu[-a,a] + \tfrac 1 2 \mu[-b,b]
		+ \tfrac 1 2 \left( \mu[-1,b] - \mu[-b,1] \right) - \tfrac 1 2 \left( \mu[-1,a] - \mu[-a,1] \right) $$
	for all $a,b > 0$.
\end{theorem}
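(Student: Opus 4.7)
The plan is to use the even--odd decomposition $\mu = \mu_+ + \mu_-$ introduced in Section~\ref{prelim} and then apply the two preceding lemmas to the two summands separately. Recall that in the one-dimensional setting the reflection $\phi_1$ acts on intervals by $\phi_1[-a,b] = [-b,a]$, so that
\[
\mu_+[-a,b] = \tfrac{1}{2}\bigl(\mu[-a,b] + \mu[-b,a]\bigr), \qquad
\mu_-[-a,b] = \tfrac{1}{2}\bigl(\mu[-a,b] - \mu[-b,a]\bigr),
\]
and in particular $\mu_+[-a,a] = \mu[-a,a]$, $\mu_-[-a,a] = 0$.

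First, I would apply \th\ref{class 1-dim even} to the even valuation $\mu_+$. This yields
\[
\mu_+[-a,b] = \tfrac{1}{2}\,\mu_+[-a,a] + \tfrac{1}{2}\,\mu_+[-b,b]
           = \tfrac{1}{2}\,\mu[-a,a] + \tfrac{1}{2}\,\mu[-b,b],
\]
where the second equality uses that $[-a,a]$ and $[-b,b]$ are invariant under $\phi_1$.

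Next, I would apply \th\ref{class 1-dim odd} to the odd valuation $\mu_-$ to get
\[
\mu_-[-a,b] = \mu_-[-1,b] - \mu_-[-1,a]
           = \tfrac{1}{2}\bigl(\mu[-1,b]-\mu[-b,1]\bigr) - \tfrac{1}{2}\bigl(\mu[-1,a]-\mu[-a,1]\bigr),
\]
again by the definition of $\mu_-$ and the action of $\phi_1$.

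Finally, summing the two expressions produces exactly the asserted formula for $\mu[-a,b] = \mu_+[-a,b] + \mu_-[-a,b]$. There is no real obstacle here; the only thing to be careful about is correctly tracking the effect of the reflection $\phi_1$ on asymmetric intervals $[-1,b]$ and $[-1,a]$, which turns them into $[-b,1]$ and $[-a,1]$ respectively and thereby produces the four terms appearing in the last two brackets.
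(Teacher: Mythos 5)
Your proof is correct and takes exactly the same route as the paper: decompose $\mu = \mu_+ + \mu_-$ into even and odd parts, apply \th\ref{class 1-dim even} to $\mu_+$ and \th\ref{class 1-dim odd} to $\mu_-$, and add. The bookkeeping of how $\phi_1$ acts on $[-1,a]$ and $[-1,b]$ is handled correctly, reproducing the four terms in the stated formula.
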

An immediate consequence of the last theorem is the following classification result.
\begin{corollary}
	If $\mu \colon \cP^1_{o} \to \R$ is a valuation,
	then there exist functions $F,G \colon \R_+ \to \R$ such that
		$$ \mu[-a,b] = F(a) + G(b) $$
	for all $a,b > 0$.
\end{corollary}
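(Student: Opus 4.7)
The corollary is a direct repackaging of \th\ref{class 1-dim}, so no new valuation identities are needed. The plan is simply to read off the right-hand side of the formula
\[
\mu[-a,b] = \tfrac12\mu[-a,a] + \tfrac12\mu[-b,b] + \tfrac12\bigl(\mu[-1,b]-\mu[-b,1]\bigr) - \tfrac12\bigl(\mu[-1,a]-\mu[-a,1]\bigr)
\]
and sort the terms according to whether they depend only on $a$ or only on $b$.

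Concretely, I would set
\[
F(a) = \tfrac12\mu[-a,a] - \tfrac12\mu[-1,a] + \tfrac12\mu[-a,1],\qquad
G(b) = \tfrac12\mu[-b,b] + \tfrac12\mu[-1,b] - \tfrac12\mu[-b,1],
\]
both of which are well-defined real numbers since $[-a,a], [-1,a], [-a,1], [-b,b], [-1,b], [-b,1] \in \cP^1_o$ whenever $a,b>0$. Substituting these definitions into the formula from \th\ref{class 1-dim} immediately gives $\mu[-a,b] = F(a)+G(b)$ for all $a,b>0$, which is the claim.

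There is no real obstacle here; the entire content was already in \th\ref{class 1-dim}, whose proof in turn rested on the even/odd split together with \th\ref{class 1-dim even} and \th\ref{class 1-dim odd}. The corollary is essentially a statement of separability of $\mu[-a,b]$ in the variables $a$ and $b$, and the preceding theorem provides an explicit such separation.
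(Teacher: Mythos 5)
Your proof is correct and matches the paper's intent exactly: the paper presents the corollary as an immediate consequence of \th\ref{class 1-dim}, and your explicit choices of $F$ and $G$ simply make the term-sorting concrete.
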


\subsection{The $2$-dimensional Case}

The following definition will be crucial for our derivation. Let $F \colon \R_+ \to \R$ and $\mu: \cQ^2 \to \R$ be given. We say that $F$ describes $\mu$ on $\cQ^2$ if
\begin{equation}\label{eq: class cQ2 even - statement}
		\mu[-ae_1,be_1,-ce_2,de_2] = F(ac) + F(bc) + F(ad) + F(bd)
	\end{equation}
for all $a,b,c,d > 0$. The next result shows that even valuations which are $\SLn[2]$-invariant can always be described by a function.
	
\begin{lemma}\th\label{class cQ2 even}
	Suppose that $\mu \colon \cQ^2 \to \R$ is an $\SLn[2]$-invariant valuation
	which is even with respect to the reflection at a coordinate hyperplane.
	Then there exists a function $F \colon \R_+ \to \R$ which describes $\mu$ on $\cQ^2$.
	
\end{lemma}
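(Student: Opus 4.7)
Abbreviate $M(a,b,c,d) := \mu[-ae_1, be_1, -ce_2, de_2]$. The plan is to extract from the valuation property an additive separability of $M$ in the first pair of variables, combine it with the analogous statement in the second pair, and then collapse the resulting structure to a single one-variable function by means of $\SLn[2]$-invariance.

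The first step is a nontrivial valuation identity on $\cQ^2$. A quadrant-by-quadrant inspection shows that if $a_1 < a_2$ and $b_1 > b_2$, then the two rhombi $[-a_1 e_1, b_1 e_1, -ce_2, de_2]$ and $[-a_2 e_1, b_2 e_1, -ce_2, de_2]$ have union $[-a_2 e_1, b_1 e_1, -ce_2, de_2]$ and intersection $[-a_1 e_1, b_2 e_1, -ce_2, de_2]$, all four of which lie in $\cQ^2$. The valuation axiom thus produces
\[ M(a_1, b_1, c, d) + M(a_2, b_2, c, d) = M(a_2, b_1, c, d) + M(a_1, b_2, c, d), \]
and a relabeling shows that this identity extends to arbitrary positive $a_1, a_2, b_1, b_2, c, d$. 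Fixing reference values yields a decomposition $M(a,b,c,d) = A(a,c,d) + B(b,c,d)$. The evenness $(a,b) \leftrightarrow (b,a)$ forces $A - B$ to be independent of its first argument; absorbing the symmetric defect produces $M(a,b,c,d) = H(a,c,d) + H(b,c,d)$, and the analogous evenness $(c,d) \leftrightarrow (d,c)$ then forces $H$ itself to be symmetric in its last two arguments.

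At this point I feed in $\SLn[2]$-invariance under $\diag(\lambda, 1/\lambda)$, which acts on $\cQ^2$ by $(a,b,c,d) \mapsto (\lambda a, \lambda b, c/\lambda, d/\lambda)$. The resulting equality
\[ H(\lambda a, c/\lambda, d/\lambda) + H(\lambda b, c/\lambda, d/\lambda) = H(a,c,d) + H(b,c,d), \]
combined with the same separation trick as before, forces $H(\lambda a, c/\lambda, d/\lambda) = H(a, c, d)$ for all positive $\lambda, a, c, d$. Since $ac$ and $ad$ generate the invariants of this one-parameter action, there is a symmetric function $\tilde H \colon \R_+^2 \to \R$ with $H(a,c,d) = \tilde H(ac, ad)$, and consequently $M(a,b,c,d) = \tilde H(ac, ad) + \tilde H(bc, bd)$.

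Finally, I rerun the entire argument with the roles of $(a,b)$ and $(c,d)$ interchanged: the analogous valuation identity obtained from pairs $[-ae_1, be_1, -c_i e_2, d_i e_2]$ with opposite orderings of the $c_i, d_i$ produces a second representation $M(a,b,c,d) = \tilde K(ac, bc) + \tilde K(ad, bd)$ with $\tilde K$ symmetric. Setting $b = a$ and comparing the two representations yields $2\,\tilde H(ac, ad) = \tilde K(ac, ac) + \tilde K(ad, ad)$, and since $(ac, ad)$ sweeps out all of $\R_+^2$ (take $a=1$, $c=p$, $d=q$) this forces $\tilde H(p,q) = F(p) + F(q)$ for $F(t) := \tfrac{1}{2}\tilde K(t,t)$. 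Substituting back gives the desired formula. The main obstacle is the opening step: recognizing that the two rhombi whose endpoint-tuples ``cross'' (one dominant in $x$-extension, the other in $y$-extension) have union and intersection which remain in $\cQ^2$, so that the valuation axiom does supply a genuinely nontrivial identity despite the apparent rigidity of $\cQ^2$. Everything following that observation is bookkeeping with evenness and the single diagonal $\SLn[2]$-action.
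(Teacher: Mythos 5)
Your proof is correct. It rests on the same two ingredients as the paper's argument — a separation identity in each pair of opposite axes, coming from the valuation axiom on rhombi, and a collapse of the remaining structure by $\SLn[2]$-invariance — but you organize them somewhat differently. The paper fixes $a,b$, observes that $[-c,d]\mapsto\mu[-ae_1,be_1,-ce_2,de_2]$ is an even valuation on $\cP_o^1$, and applies \th\ref{class 1-dim even} twice to land directly on $\mu[-ae_1,be_1,-ce_2,de_2]=\tfrac14\bigl(\mu[-ae_1,ae_1,-ce_2,ce_2]+\mu[-be_1,be_1,-ce_2,ce_2]+\mu[-ae_1,ae_1,-de_2,de_2]+\mu[-be_1,be_1,-de_2,de_2]\bigr)$, after which one application of a diagonal $\SLn[2]$ map and the definition $F(s)=\tfrac14\mu[-se_1,se_1,-e_2,e_2]$ finishes. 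You instead re-derive the rectangle identity geometrically, absorb the symmetric defect to write $\mu$ as $H(a,c,d)+H(b,c,d)$, use invariance under $\diag(\lambda,1/\lambda)$ to replace $H$ by a function of the invariants $(ac,ad)$, and then compare the two resulting representations (with the roles of $(a,b)$ and $(c,d)$ swapped) to split the two-variable function into a sum $F(p)+F(q)$. Both routes work; the paper's is shorter because \th\ref{class 1-dim even} already packages the separability, and because it defers the $\SLn[2]$-invariance to a single final step rather than invoking invariant theory of the one-parameter subgroup midway. Your comparison-of-two-representations device at the end is a neat alternative to the paper's direct evaluation, but it does not yield anything the paper's computation does not already give.
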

\begin{proof}
	Fix $a,b \in \R_+$. Recall that $\mu$ is actually even with respect to both reflections at the
	coordinate hyperplanes by the $\SLn[2]$-invariance. Then $ \nu_1[-c,d] := \mu[-ae_1,be_1,-ce_2,de_2]$ is an even valuation on $\cP^1_{o}$.
	From \th\ref{class 1-dim even} we infer that
	\begin{align*}
		\mu[-ae_1,be_1,-ce_2,de_2]
		&= \nu_1[-c,d] = \tfrac 1 2\, \nu_1[-c,c] + \tfrac 1 2\, \nu_1[-d,d] \\
		&= \tfrac 1 2\, \mu[-ae_1,be_1,-ce_2,ce_2] + \tfrac 1 2\, \mu[-ae_1,be_1,-de_2,de_2].
	\end{align*}
	If we apply a similar argument to $\nu_2[-a,b]:=\mu[-ae_1,be_1,-ce_2,ce_2]$ as well as to    			
	$\nu_3[-a,b]:=\mu[-ae_1,be_1,-de_2,de_2]$, then we arrive at
	\begin{eqnarray*}
		\mu[-ae_1,be_1,-ce_2,de_2] &=& \tfrac 1 4\, \mu[-ae_1,ae_1,-ce_2,ce_2] + \tfrac 1 4\, \mu[-be_1,be_1,-ce_2,ce_2] \\
			&&{}+ \tfrac 1 4\, \mu[-ae_1,ae_1,-de_2,de_2] + \tfrac 1 4\, \mu[-be_1,be_1,-de_2,de_2] .
	\end{eqnarray*}
	The $\SLn[2]$-invariance of $\mu$ now implies that
	\begin{eqnarray*}
		\mu[-ae_1,be_1,-ce_2,de_2] &=& \tfrac 1 4\, \mu[-ace_1,ace_1,-e_2,e_2] + \tfrac 1 4\, \mu[-bce_1,bce_1,-e_2,e_2] \\
			&&{}+ \tfrac 1 4\, \mu[-ade_1,ade_1,-e_2,e_2] + \tfrac 1 4\, \mu[-bde_1,bde_1,-e_2,e_2] .
	\end{eqnarray*}
	Finally, set $F(s) = \frac 1 4 \mu[-se_1,se_1,-e_2,e_2]$ for $ s \in \R_+$.
\end{proof}
For odd, upper semicontinuous, and $\SLn[2]$-invariant valuations more can be said.
\begin{lemma}\th\label{vanish cQ2 odd}
	If $\mu \colon \cQ^2 \to \R$ is an upper semicontinuous and $\SLn[2]$-invariant valuation
	which is odd with respect to the reflection at a coordinate hyperplane,
	then $\mu = 0$. In particular, the function $F = 0$ describes $\mu$ on $\cQ^2$.
\end{lemma}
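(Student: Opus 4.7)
My plan is to show that $f(a,b,c,d) := \mu[-ae_1,be_1,-ce_2,de_2]$ vanishes identically on $\R_+^4$: I reduce $f$ to a quadrilinear combination of an antisymmetric bivariate kernel $g$, derive a functional equation for $g$ from the SL(2) diagonal scaling, and finally exploit upper semicontinuity to force $g$ to be of the form $\sigma(a)-\sigma(y)$, which makes $f$ telescope to zero.

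For the first stage, I fix $a,b$ and observe that $[-c,d]\mapsto f(a,b,c,d)$ is a valuation on $\cP^1_o$: when $c\leq c'$ and $d'\leq d$, both the intersection and the union in $\cQ^2$ of $[-ae_1,be_1,-ce_2,de_2]$ and $[-ae_1,be_1,-c'e_2,d'e_2]$ lie in $\cQ^2$, and the valuation property of $\mu$ transfers. This 1D valuation is odd since $\mu$ is odd under $\phi_2$, so Lemma~\th\ref{class 1-dim odd} gives $f(a,b,c,d)=f(a,b,1,d)-f(a,b,1,c)$; the analogous reduction in $(a,b)$, valid because SL(2)-invariance promotes $\phi_2$-oddness to $\phi_1$-oddness, combined with the above yields, with $q(x,y):=f(1,x,1,y)$,
\[
f(a,b,c,d) = q(a,c)+q(b,d)-q(a,d)-q(b,c).
\]
The $90^\circ$ rotation $\left(\begin{smallmatrix} 0 & 1 \\ -1 & 0 \end{smallmatrix}\right)\in\SLn[2]$ sends $[-ae_1,be_1,-ce_2,de_2]$ to $[-ce_1,de_1,-be_2,ae_2]$, so $f(a,b,c,d)=f(c,d,b,a)$; substituting into the quadrilinear form forces the symmetric part of $q$ to be of the shape $R(x)+R(y)+\mathrm{const}$, which contributes nothing to $f$. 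Replacing $q$ by its antisymmetric part $g$, I obtain
\[
f(a,b,c,d)=g(a,c)-g(a,d)-g(b,c)+g(b,d),\qquad g(y,x)=-g(x,y),
\]
with $g$ Borel measurable since $\mu$ is upper semicontinuous.

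The second stage extracts a functional equation from the remaining SL(2)-symmetry. The diagonal scaling $\operatorname{diag}(\lambda,1/\lambda)$ gives $f(\lambda a,\lambda b,c/\lambda,d/\lambda)=f(a,b,c,d)$; substituting and using antisymmetry to separate variables yields, for measurable $M,N$,
\[
g(\lambda a,y)-g(a,\lambda y)=M(a,\lambda)+M(y,\lambda)+N(\lambda).
\]
The key observation is that $g(a,y)\leadsto g(a,y)+\Phi(a)-\Phi(y)$ preserves both antisymmetry and the value of $f$, so I may choose $\Phi$ to absorb the $a$- and $y$-dependence of the right-hand side. The obstruction is the measurable 2-cocycle $h(a,\lambda):=M(a,\lambda)-M(1,\lambda)$, which, by iterating the scaling identity at $\lambda_1\lambda_2$, satisfies $h(ab,c)+h(a,b)=h(a,bc)+h(b,c)$; since the multiplicative group $\R_+$, isomorphic to $(\R,+)$ via $\log$, has vanishing measurable second cohomology, a measurable $\Phi$ trivializing $h$ exists, and the resulting $\tilde g$ satisfies the reduced equation $\tilde g(\lambda a,y)-\tilde g(a,\lambda y)=T(\lambda)$.

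For the endgame, setting $y=a$ and invoking antisymmetry gives $\tilde g(a,\lambda a)=-T(\lambda)/2$; taking $\lambda=y/a$ yields $\tilde g(a,y)=-T(y/a)/2$ for all $a,y>0$. Substituting back into the reduced equation produces the multiplicative Cauchy equation $T(uv)=T(u)+T(v)$, and measurability forces $T(\lambda)=c\log\lambda$ for some constant $c$. Hence $\tilde g(a,y)=\sigma(a)-\sigma(y)$ with $\sigma(x)=(c/2)\log x$, and plugging this into the quadrilinear form telescopes $f$ to zero. The main obstacle is the gauge-absorption step, i.e.\ writing the measurable 2-cocycle $h$ as a coboundary; this is where upper semicontinuity is essentially used (to supply measurability), and it is the technical core of the proof, everything else being direct manipulation of functional equations.
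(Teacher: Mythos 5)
Your proof takes a genuinely different route from the paper's. After the common reduction to the quadrilinear form $f(a,b,c,d)=g(a,c)-g(a,d)-g(b,c)+g(b,d)$ with $g$ antisymmetric, the paper passes to $G(x,y)=g(e^x,e^y)$, runs an integer induction on $\max(k,l)$ to show $G(kx,lx)=0$ for all $k,l\in\mathbb{Z}$, concludes $G=0$ on $\mathbb{Q}^2$, and then uses upper semicontinuity together with antisymmetry to force $G\equiv 0$. You instead solve the scaling functional equation analytically: you derive
$g(\lambda a,y)-g(a,\lambda y)=M(a,\lambda)+M(y,\lambda)+N(\lambda)$,
isolate the cocycle $h(a,\lambda)=M(a,\lambda)-M(1,\lambda)$, trivialize it by quoting vanishing of measurable $H^2(\R_+,\R)$, and finish with Cauchy's equation and telescoping. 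The paper's route is elementary and self-contained and makes the use of upper semicontinuity transparent; yours replaces the combinatorics with a structural argument at the cost of invoking a nontrivial black box.

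Two points deserve attention. First, the cohomology step is the only place where your proof is not self-contained: the claim that every Borel $2$-cocycle on $\R$ with values in $\R$ is a Borel coboundary is a theorem of Moore/Mackey-type cohomology theory (or follows from van Est plus Mackey's Borel-group theorem), not an elementary fact, and would need a reference. Second, the cohomology machinery is actually unnecessary here, because the boundary conditions you have (which follow from oddness and $\SLn[2]$-invariance, e.g. $g(1,\cdot)=0$) collapse the cocycle. Setting $a=y=1$ in your functional equation gives $N(\lambda)=-2M(1,\lambda)$, and then setting $a=1$ gives $g(\lambda,y)=M(y,\lambda)-M(1,\lambda)=h(y,\lambda)$. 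Thus $h(a,\lambda)=g(\lambda,a)$ is \emph{antisymmetric}. Applying the cocycle identity with the triples $(a,b,d)$, $(a,d,b)$, $(d,a,b)$ and using antisymmetry yields $h(ab,d)=h(a,d)+h(b,d)$; a Borel bi-additive antisymmetric function on $\R_+^2$ is, after taking logarithms, an alternating bilinear form on $\R$, hence zero. This makes the gauge step vacuous, gives $g\equiv 0$ directly, and eliminates the deep input. Note that both your proof (granting the cohomology fact) and this simplification use only measurability of $\mu$, whereas the paper's proof genuinely needs upper semicontinuity in its last step; so you are actually proving a slightly stronger statement than the lemma asserts.
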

\begin{proof}
	Fix $a,b\in\R_+$. Recall that $\mu$ is actually odd with respect to both reflections at the
	coordinate hyperplanes by the $\SLn[2]$-invariance. Therefore, the map $[c,d] \mapsto \mu[-ae_1,be_1,-ce_2,de_2]$  
	is an odd valuation on $\cP_o^1$. By \th\ref{class 1-dim odd} we have
		$$\mu[-ae_1,be_1,-ce_2,de_2]	= \mu[-ae_1,be_1,-e_2,de_2] - \mu[-ae_1,be_1,-e_2,ce_2]. $$
	A similar argument applied to the terms on the right hand side of the last equation shows that
	\begin{eqnarray*}
		\mu[-ae_1,be_1,-ce_2,de_2]	&=& \mu[-e_1,be_1,-e_2,de_2] - \mu[-e_1,ae_1,-e_2,de_2] \\
						& & - \mu[-e_1,be_1,-e_2,ce_2] + \mu[-e_1,ae_1,-e_2,ce_2]
	\end{eqnarray*}
	for all $a,b,c,d \in \R_+$.
	For $x,y\in\R_+$ set $F(x,y) = \mu[-e_1,xe_1,-e_2,ye_2]$. Then
		$$ \mu[-ae_1,be_1,-ce_2,de_2] = F(b,d) - F(a,d) - F(b,c) + F(a,c) .$$
	From the $\SLn[2]$-invariance of $\mu$ and the fact that $\mu$ is odd with respect to reflections at
	coordinate hyperplanes we deduce the following. $F$ is antisymmetric, i.e. $F(x,y) = - F(y,x)$ for all $x,y\in\R_+$, $F(\cdot\,,1) = 0$, $F(1,\cdot) = 0$, and $F(x,1/x) = 0$ for all $x \in \R_+$. Moreover, for $a,b,c,d \in \R_+$ the quantity
		$$F(rb,d/r) - F(ra,d/r) - F(rb,c/r) + F(ra,c/r) $$
	is independent of $r\in\R_+$.
	For simplicity we will consider the function $G(x,y) = F(\exp(x),\exp(y))$ for $x,y \in \R$.
	Clearly, $G$ inherits the properties just established for $F$. Thus $G$ is antisymmetric and
		\begin{equation}\label{eq: properties G}
			G(\cdot\,,0) = 0,\qquad G(0,\cdot) = 0,\quad\textnormal{ and }\quad G(x,-x) = 0 \,\,\textnormal{ for all }x\in\R.
		\end{equation}
	Moreover, for $a,b,c,d \in \R$ the quantity
	\begin{equation}\label{eq: vanish cQ2 odd - functional equation}
	 	G(b+r, d-r) - G(a+r, d-r) - G(b+r, c-r) + G(a+r, c-r)
	 \end{equation}
	is independent of $r \in \R$.
	We have to show that $G = 0$. In order to do so,
	let $k,l \in \mathbb{N}_0$ and $x \in \R$.
	Since the function in \eqref{eq: vanish cQ2 odd - functional equation} is independent of $r$, 
	the choice $a = 0$, $b = kx$, $c = 0$, $d = lx$ for $r = 0$ and $r = -x$, respectively, shows 
	with the aid of \eqref{eq: properties G} that
	\begin{equation}\label{eq: vanish cQ2 odd - induction first equation}
		G(kx,lx) = G \left( (k-1)x, (l+1)x \right) - G \left( -x, (l+1)x \right) - G \left( (k-1)x, x \right).
	\end{equation}
	Similarly, set $a = 0$, $b = kx$, $c = 0$, $d = -lx$ for $r = 0$ and $r = -x$, respectively, to get
	\begin{equation}\label{eq: vanish cQ2 odd - induction second equation}
		G(kx,-lx) = G \left( (k-1)x, -(l-1)x \right) - G \left( -x, -(l-1)x \right) - G \left( (k-1)x, x \right).
	\end{equation}
	We will simultaneously prove $G(kx,lx) = 0$ and $G(kx,-lx) = 0$ for all $k,l \in \mathbb{N}_0$ and each $x \in \R$ by induction over $m = \max(k,l)$.
	For $m=0,1$ this is a direct consequence of \eqref{eq: properties G} and the antisymmetry of $G$.
	Let $m \geq 2$.
	Since $G$ is antisymmetric we can assume without loss of generality that $l \leq k$.
	We have to treat several different cases for $l$.
	If $l = 0$ or $l = k$, then the induction statement again follows from \eqref{eq: properties G} and the antisymmetry of $G$.
	For $l \in \{1,\ldots,k-1\}$ we first take a glance at \eqref{eq: vanish cQ2 odd - induction second equation}.
	Then we can use the induction hypothesis to see that $G(kx,-lx) = 0$.
	Next, we have a look at \eqref{eq: vanish cQ2 odd - induction first equation}.
	For $l = k-1$ we use the induction hypothesis and what we already have shown to get
		$$ G(kx,(k-1)x) = G \left( (k-1)x, kx \right) .$$
	By the antisymmetry of $G$ again, we obtain $G(kx,(k-1)x) = 0$.
	For $l \in \{1,\ldots,k-2\}$, the induction hypothesis and \eqref{eq: vanish cQ2 odd - induction first equation} 
	directly yield $G(kx,lx) = 0$. This finishes the induction. Hence, we conclude that 
		\begin{equation}\label{eq: G almost zero}
			G(kx,lx) = 0 \quad\textnormal{ for all }\,\, k,l\in\mathbb{Z} \,\,\textnormal{ and }\,\, x\in\R.
		\end{equation}	
	Let $y,z\in\mathbb{Q}\backslash\{0\}$. Then there exist $p,q,r,s \in \mathbb{Z}\backslash\{0\}$ with $y = p/q$ and $z = r/s$. Set $k = ps$,
	$l = qr$, and $x=1/qs$. From \eqref{eq: G almost zero} we deduce that $G(y,z)=0$. Thus $G$ vanishes on $\mathbb{Q}^2$.
	In addition, $G$ is upper semicontinuous because $\mu$ is upper semicontinuous. 
	Thus $G$ must be nonnegative.
	But $G$ is also antisymmetric, so it has to vanish on $\R^2$.
\end{proof}

We need an extension of the concept that a function $F$ describes a map $\mu$ on $\cQ^2$. Let $F : \R_+ \to \R$, $k \in \R$, and $\mu \colon \cR^2 \to \R$. We say that $F$ and $k$ describe $\mu$ on $\cR^2$, if
\begin{equation}\label{eq: class cR2 - statement}
		\mu \left[ -ae_1, be_1, c \ptwo x {-1}, d \ptwo y 1 \right]
		= F(ac) + F(bc) + F(ad) + F(bd) + k \left( b^{-2} - a^{-2} \right) (x+y)
	\end{equation}
for all $a,b,c,d \in \R_+$ and $x,y \in \R$ with
		\begin{equation}\label{admiss R}
			\left[ -ae_1, be_1, c \ptwo x {-1}, d \ptwo y 1 \right] \cap (\R \times \{0\}) = [-ae_1, be_1] .
	  \end{equation}
\begin{lemma}\th\label{class cR2}
	Suppose that $\mu \colon \cR^2 \to \R$ is a measurable $\SLn[2]$-invariant valuation.
	If $F$ describes $\mu$ on $\cQ^2$, then there 
	exists a constant $k \in \R$ such that $F$ and $k$ describe $\mu$ on $\cR^2$.
	
\end{lemma}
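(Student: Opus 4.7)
The plan is to first use $\SLn[2]$-invariance to collapse the two parameters $x,y$ into the single parameter $t:=x+y$, then establish affine dependence on $t$ via a Cauchy-type equation coming from the valuation property, and finally pin down the coefficient using further $\SLn[2]$-invariance.

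\textbf{Step 1 (Shear reduction to a single parameter).} The matrix $\sigma_s=\left(\begin{smallmatrix}1&-s\\0&1\end{smallmatrix}\right)$ belongs to $\SLn[2]$, fixes both $-ae_1$ and $be_1$, and satisfies $\sigma_s\bigl(c\ptwo{x}{-1}\bigr)=c\ptwo{x+s}{-1}$ and $\sigma_s\bigl(d\ptwo{y}{1}\bigr)=d\ptwo{y-s}{1}$. Choosing $s=y$ and invoking $\SLn[2]$-invariance gives
\[
\mu\Bigl[-ae_1,be_1,c\ptwo{x}{-1},d\ptwo{y}{1}\Bigr]=\mu\Bigl[-ae_1,be_1,c\ptwo{t}{-1},de_2\Bigr],\qquad t:=x+y.
\]
Hence it suffices to study $g_{a,b,c,d}(t):=\mu\bigl[-ae_1,be_1,c\ptwo{t}{-1},de_2\bigr]$. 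The hypothesis that $F$ describes $\mu$ on $\cQ^2$ gives the initial value $g_{a,b,c,d}(0)=F(ac)+F(bc)+F(ad)+F(bd)$.

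\textbf{Step 2 (Affine dependence on $t$).} I would show that $g_{a,b,c,d}(t)=g_{a,b,c,d}(0)+\alpha(a,b,c,d)\,t$ for some measurable function $\alpha$. The strategy is to exhibit a Cauchy-type functional equation for $g$ by exploiting the valuation property on compatible pairs in $\cR^2$. Concretely, for polytopes $K=[-ae_1,be_1,c_1\ptwo{t_1}{-1},de_2]$ and $L=[-ae_1,be_1,c_2\ptwo{t_2}{-1},de_2]$ with shared base and top, the intersection $K\cap L$ is again of the form $[-ae_1,be_1,c^{\ast}\ptwo{t^{\ast}}{-1},de_2]\in\cR^2$, with $(c^{\ast},t^{\ast})$ computable algebraically from $(c_i,t_i)$. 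Under restrictions on the parameters that keep $K\cup L$ convex and in $\cR^2$, the valuation identity $\mu(K)+\mu(L)=\mu(K\cup L)+\mu(K\cap L)$ produces a Cauchy-type equation in $t$; together with measurability of $\mu$, this forces affinity of $g_{a,b,c,d}$ in $t$ (one can then verify that the affinity cancels the $t^{\ast}$-nonlinearity coming from the intersection formula).

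\textbf{Step 3 (Identification of the coefficient $\alpha$).} The diagonal action $\diag(\lambda,\lambda^{-1})\in\SLn[2]$ sends $(a,b,c,d,t)$ to $(\lambda a,\lambda b,c/\lambda,d/\lambda,\lambda^{2}t)$, so the $\SLn[2]$-invariance of $\mu$ combined with invariance of the $F$-part forces $\alpha(\lambda a,\lambda b,c/\lambda,d/\lambda)\lambda^{2}=\alpha(a,b,c,d)$. The central symmetry $-I\in\SLn[2]$ sends $(a,b,c,d,t)$ to $(b,a,d,c,-t)$, giving $\alpha(b,a,d,c)=-\alpha(a,b,c,d)$. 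Additional valuation identities obtained by varying the apex heights $c$ and $d$ independently (together with the symmetries above) force $\alpha$ to be independent of $c$ and $d$; the remaining homogeneity and antisymmetry then determine $\alpha(a,b,c,d)=k(b^{-2}-a^{-2})$ for a single constant $k\in\R$.

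\textbf{Main obstacle.} The crux is Step 2. The difficulty is that generic pairs $K,L\in\cR^2$ with overlapping interior fail to have convex union (so $K\cup L\notin\cR^2$), while any hyperplane cut of a single $\cR^2$-polytope leaves the origin only on the boundary of at least one piece. The valid pairs are therefore quite restricted, and a useful Cauchy equation must be extracted from the algebraic formula for $K\cap L$ in a careful parameter regime. An alternative route that avoids this combinatorial delicacy is to construct from $\mu$ a vector-valued valuation on $\cP_o^2$ whose $\GLn$-covariance matches that required by \th\ref{Ludwig moment}, and to read off the $t$-linear contribution from the resulting multiple of the polar moment vector $m^{\ast}$.
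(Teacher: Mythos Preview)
Your Step~2 contains a genuine gap. The pairs $K=[-ae_1,be_1,c_1\ptwo{t_1}{-1},de_2]$ and $L=[-ae_1,be_1,c_2\ptwo{t_2}{-1},de_2]$ that you propose share three of their four vertices; whenever the two bottom vertices are in general position the union $K\cup L$ is (when convex at all) a pentagon, and hence does \emph{not} lie in $\cR^2$, whose elements are by definition quadrilaterals of the form $[P,u,v]$. So the valuation identity simply cannot be invoked on these pairs, and your Cauchy equation never gets off the ground. Your alternative suggestion via \th\ref{Ludwig moment} also does not apply directly: that theorem classifies vector-valued valuations on $\cP_o^n$ for $n\geq 2$, whereas the natural receptacle for the $t$-coefficient here is a scalar map on $\cP_o^1$.

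The paper avoids this obstruction by choosing its valuation pairs so that one of the four bodies is (an $\SLn[2]$-image of) a $\cQ^2$-element, on which $\mu$ is already known via $F$. Concretely, it first pairs the target polytope with $[-ae_1,be_1,-t\ptwo{y}{1},t'\ptwo{y}{1}]$ (both apices along the same ray, hence $\SLn[2]$-equivalent to $\cQ^2$); the resulting identity shows that the residual $\mu-\sum F$ is independent of $c$ and $d$. Only \emph{after} this reduction does it use a second pair with $K\cap L=[-ae_1,be_1,-re_2,re_2]\in\cQ^2$, which yields $f(x,y)=f(x,0)+f(0,y)$; combined with your shear identity $f(x,y)=f(x+y,0)$ this is immediately Cauchy's equation in one real variable. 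The linear coefficient is then a map $\nu\colon\cP_o^1\to\R$ that is an odd, $(-2)$-homogeneous valuation, and the paper finishes by invoking \th\ref{class 1-dim odd} and solving the elementary functional equation $G(rb)-G(ra)=r^{-2}(G(b)-G(a))$, rather than the homogeneity/antisymmetry bookkeeping you outline in Step~3.
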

\begin{proof} We denote by $A$ the set of all points $(a,b,c,d,x,y) \in \R_+^4 \times \R^2$
	for which (\ref{admiss R}) holds. Let $(a,b,c,d,x,y)\in A$.
	From the valuation property of $\mu$ we infer
	\begin{align*}
		&  \mu \left[ -ae_1, be_1, c \ptwo x {-1}, d \ptwo y 1 \right]
		+  \mu \left[ -ae_1, be_1, -t \ptwo y 1, t' \ptwo y 1 \right] \\
		=& \mu \left[ -ae_1, be_1, c \ptwo x {-1}, t' \ptwo y 1 \right]
		+  \mu \left[ -ae_1, be_1, -t \ptwo y 1, d \ptwo y 1 \right]
	\end{align*}
	for sufficiently small $t,t' > 0$. By assumption there exists 
	a function $F \colon \R_+ \to \R$ which satisfies \eqref{eq: class cQ2 even - statement}.
	For this $F$ the $\SLn[2]$-invariance of $\mu$ gives
	\begin{align*}
		&  \mu \left[ -ae_1, be_1, c \ptwo x {-1}, d \ptwo y 1 \right]
		+  F(at') + F(bt') \\
		=& \mu \left[ -ae_1, be_1, c \ptwo x {-1}, t' \ptwo y 1 \right]
		+  F(ad) + F(bd) .
	\end{align*}
	This equality shows that the function
		$$ \mu \left[ -ae_1, be_1, c \ptwo x {-1}, d \ptwo y 1 \right] - F(ad) - F(bd) $$
	does not depend on $d$.
	Similarly it follows that
		$$ \mu \left[ -ae_1, be_1, c \ptwo x {-1}, d \ptwo y 1 \right] - F(ac) - F(bc) $$
	does not depend on $c$.
	Combining the last two statements we obtain that the expression
	\begin{equation}\label{eq: class cR2 - not depend}
		\mu \left[ -ae_1, be_1, c \ptwo x {-1}, d \ptwo y 1 \right] - F(ac) - F(bc) - F(ad) - F(bd) 
	\end{equation}
	is independent of $c$ and $d$. Fix $a,b \in \R_+$. Given $x,y \in \R$, choose sufficiently small $c$ and $d$ such that $(a,b,c,d,x,y) \in A$ and set
			$$ f(x,y) = \mu \left[ -ae_1, be_1, c \ptwo x {-1}, d \ptwo y 1 \right] - F(ac) - F(bc) - F(ad) - F(bd). $$
	By what we have already shown, the function $f: \R^2 \to \R$ is well defined. Let $\phi \in \SLn[2]$ be the     map given by $\phi e_1 = e_1$ and $\phi e_2 = -y e_1 + e_2$. Note that $\phi (x,-1) = (x+y, -1)$ 
	and $\phi (y,1) = (0,1)$. Moreover, it follows immediately from the definition of the set $A$ that $(a,b,c,d,x,y) \in   A$ if and only if $(a,b,c,d,x+y,0) \in A$.
	By the $\SLn[2]$-invariance of $\mu$ we therefore have 
	\begin{equation}\label{eq: f sln variant}
		f(x,y) = f(x+y,0) \qquad \textnormal{for all } \quad x,y \in \R.
	\end{equation}
	Moreover, if $(a,b,c,d,x,y)\in A$ then
	\begin{align*}
		&  \mu \left[ -ae_1, be_1, c \ptwo x {-1}, d \ptwo y 1 \right]
		+  \mu \left[ -ae_1, be_1, -re_2, re_2 \right] \\
		=& \mu \left[ -ae_1, be_1, c \ptwo x {-1}, re_2 \right]
		+  \mu \left[ -ae_1, be_1, -re_2, d \ptwo y 1 \right]
	\end{align*}
	for sufficiently small $r > 0$.
	Relation \eqref{eq: class cQ2 even - statement} and the definition of $f$ yield
		$$ f(x,y) = f(x,0) + f(0,y) $$
	for all $x , y \in \R $. Set $g(x) = f(x,0)$. By \eqref{eq: f sln variant} we get
		$$ g(x+y) = g(x) + g(y) $$
	for all $x , y \in \R $. This is Cauchy's functional equation. Recall that the only measurable solutions of Cauchy's 		functional equation are the linear ones.
	Since the measurability of $\mu$ implies the measurability of $g$, it follows that $g$ has to be linear.
	Thus there exists a $\nu \colon \cP^1_{o} \to \R$ such that for $(a,b,c,d,x,y)\in A$
		\begin{equation}\label{eq: representation mu} 
			\mu \left[ -ae_1, be_1, c \ptwo x {-1}, d \ptwo y 1 \right]
			= F(ac) + F(bc) + F(ad) + F(bd) + \nu[-a,b] (x+y) .
		\end{equation}
	Since $\mu$ is a valuation, it is easy to verify that $\nu$ is also a valuation.
	For $r > 0$ and suitably small $t,t' > 0$, the $\SLn[2]$-invariance of $\mu$ implies that
		\begin{eqnarray*}
			\mu \left[ -rae_1, rbe_1, t \ptwo x {-1}, t' \ptwo y 1 \right]
			&=&  \mu \left[ -ae_1, be_1, t \ptwo {\frac x r} {-r}, t' \ptwo {\frac y r} r \right]\\
			&=&  \mu \left[ -ae_1, be_1, rt \ptwo {\frac x {r^2}} {-1}, rt' \ptwo {\frac y {r^2}} 1 \right].
		\end{eqnarray*}	
	This and \eqref{eq: representation mu} show that $\nu$ is positively homogeneous of degree $-2$.
	The $\SLn[2]$-invariance of $\mu$ also implies that $\nu$ is odd.
	By \th\ref{class 1-dim odd} there exists a function $G : \R_+ \to \R$ with
		\begin{equation}\label{eq: nu g} \nu[-a,b] = G(b) - G(a) .\end{equation}
	and $G(1) = 0$. The homogeneity of $\nu$ implies that
	\begin{align*}
		G(rb) - G(ra) &= \nu[-ra,rb] = r^{-2} \nu[-a,b] = r^{-2} G(b) - r^{-2} G(a)
	\end{align*}
	for all $a,b,r \in \R_+$.	Now take $a = 1$ in order to arrive at
		$$ G(rb) = r^{-2} G(b) + G(r) .$$
	By symmetry we also have
		$$ G(rb) = b^{-2} G(r) + G(b) $$
	for all $b,r \in \R_+$. Combining these two equations leads to
	
		$$r^{-2} G(b) + G(r) = b^{-2} G(r) + G(b) . $$
	for all $b,r \in \R_+$. Choose $b=\sqrt{2}$. An elementary calculation shows that
		$$ G(r) = 2(1- r^{-2})G(\sqrt{2})$$
	for all $r \in \R_+$.
	It follows from \eqref{eq: nu g} that there exists a constant $k \in \R$ such that
		$$ \nu[-a,b] = k \left( b^{-2} - a^{-2} \right) .$$
\end{proof}
The next lemma provides an explicit description of the function $F$ from \th\ref{class cR2}.
\begin{lemma}\th\label{class cR2 improved}
	Suppose that $\mu \colon \cR^2 \to \R$ is a measurable $\SLn[2]$-invariant valuation 
	and let $F \colon \R_+ \to \R$ and $k\in\R$ describe $\mu$ on $\cR^2$.
	Then there exist constants $c_1, c_2, c_3 \in \R$ such that
		$$ F(r) = \frac {c_1} r + c_2 + c_3 r $$
	for all $r > 0$, where $c_1 = -2k$.
\end{lemma}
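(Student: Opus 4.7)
The plan is to combine the formula for $\mu$ on $\cR^2$ given by \th\ref{class cR2} with additional applications of $\SLn[2]$-invariance and the valuation property to derive a functional equation on $F$, solve it using Borel measurability, and extract the relation $c_1 = -2k$ in the same computation.

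First, I would apply the diagonal $\SLn[2]$-action $\phi = \diag(r, r^{-1})$ to \eqref{eq: class cR2 - statement}: the substitution $(a, b, c, d, x, y) \to (ra, rb, c/r, d/r, r^2 x, r^2 y)$ preserves both the $F$-part (since the products $ac, bc, ad, bd$ are fixed) and the $k$-term. Combining this homogeneity with a valuation identity — obtained via Ludwig's inductive extension of $\mu$ to $\cP_n^2$ and the weak valuation property for cuts by halfspaces through the origin, since direct valuation identities within $\cR^2$ alone are too scarce (unions of two $\cR^2$-quadrilaterals are generically non-convex) — yields a multiplicative-type functional equation on $F$. This equation is the natural analogue of the one solved for $G$ in the proof of \th\ref{class cR2}; it couples the $r^{-2}$-behaviour of $F$ directly to the constant $k$, so that the eventual $r^{-1}$-coefficient of $F$ will be forced to equal $-2k$ (just as the factor $2$ in $G(r) = 2(1 - r^{-2})G(\sqrt 2)$ arose from the exponent $-2$ in the scaling).

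Second, Borel measurability of $F$ is automatic: from the identity $F(r) = \tfrac 1 4 \mu[-\sqrt r \, e_1, \sqrt r \, e_1, -\sqrt r \, e_2, \sqrt r \, e_2]$ and the continuity of the polytope-valued map in $r$, $F$ is Borel whenever $\mu$ is. Solving the functional equation in the Borel class — by normalizing at a convenient value such as $r = \sqrt 2$, exactly as in the proof of \th\ref{class cR2} — produces the three-parameter family $F(r) = c_1/r + c_2 + c_3 r$, and the derivation simultaneously fixes $c_1 = -2k$. As a sanity check on the relation, substituting $F(r) = c_1/r + c_2 + c_3 r$ back into \eqref{eq: class cR2 - statement} and using \eqref{eq: polar product} together with a short two-dimensional integration identifies the $c_1$-term with $c_1 V_n(P^*)$ and the $c_3$-term with $2c_3 V_n(P)$; since the polar volume $V_n(P^*)$ carries an intrinsic $(x + y)$-contribution of $-\tfrac 1 2 (b^{-2} - a^{-2})(x + y)$ while $V_n(P)$ and $V_0(P)$ carry none, the compatibility of the whole expression demands $k + c_1/2 = 0$.

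The main obstacle is the first step. Within $\cR^2$ the valuation identities are scarce, so one cannot directly read off a functional equation on $F$ by pairing two elements of $\cR^2$. The remedy is to pass through Ludwig's extension of $\mu$ to $\cP_n^2$ and exploit the weak valuation property for cuts by halfspaces through the origin, in a manner parallel to but more elaborate than the argument already employed in the proof of \th\ref{class cR2}.
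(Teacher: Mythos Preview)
Your proposal has a genuine gap at precisely the point you flag as the ``main obstacle.'' You write that ``within $\cR^2$ the valuation identities are scarce'' and therefore propose to pass through Ludwig's extension to $\cP_n^2$. This is the wrong direction on both counts. First, Ludwig's extension (\th\ref{Ludwig Extension} and the discussion preceding it) goes the other way: it takes a valuation that \emph{vanishes} on $\cR^n$ and extends the vanishing to $\cP_o^n$. It does not supply new relations on $\cR^2$ that one could feed back into \eqref{eq: class cR2 - statement} to constrain $F$. Second, and more importantly, no valuation identity is needed at all: the paper's argument uses only $\SLn[2]$-invariance applied to a \emph{single} element of $\cR^2$, namely a triangle $S = [-re_1, -se_2, (a,b)]$. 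The point is that $S$ can be written in the form required by \eqref{eq: class cR2 - statement} in two inequivalent ways, once with its base on the $e_1$-axis and once (after an $\SLn[2]$ rotation) with its base on the $e_2$-axis. Equating the two resulting expressions for $\mu(S)$ gives a four-parameter functional equation relating $F$ and $k$; subtracting the particular solution $-2k/r$ reduces it to an equation for $G(r) = F(r) + 2k/r$ alone, and careful substitutions collapse this to Cauchy's equation, whence $G(r) = c_2 + c_3 r$.

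Your diagonal-action observation is correct but, as you note yourself, vacuous; and the ``sanity check'' at the end is a consistency verification, not a derivation. What is missing is exactly the device that produces a nontrivial constraint: evaluate $\mu$ on a triangle twice. Once you have that, neither the extension to $\cP_n^2$ nor any additional valuation identity is required.
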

\begin{proof}
	For $a,b,r,s\in\R_+$ consider the convex polytope
		$$ S = \left[ -r e_1, -s e_2, \ptwo a b \right] .$$
	Observe that $S$ can be rewritten as
		$$ S = \left[ -r e_1, \frac {sa} {s+b} e_1, s \ptwo 0 {-1}, b \ptwo {\frac a b} 1\right]\quad\textnormal{and}\quad S = \left[ -s e_2, \frac {rb} {r+a} e_2, a         \ptwo 1 {\frac b a}, r \ptwo {-1} 0 \right] .$$
	Using the $\SLn[2]$-invariance of $\mu$ and \eqref{eq: class cR2 - statement}
	we can therefore calculate $\mu(S)$ in two different ways, namely
		$$ \mu(S) = F(rs) + F \left( \frac {s^2a} {s+b} \right) + F(rb) + F \left( \frac {sab} {s+b} \right)
		+ k \left( \left( \frac {sa} {s+b} \right)^{-2} - r^{-2} \right) \frac a b $$
	and
		$$ \mu(S) = F(sa) + F \left( \frac {rab} {r+a} \right) + F(sr) + F \left( \frac {r^2b} {r+a} \right)
		+ k \left( \left( \frac {rb} {r+a} \right)^{-2} - s^{-2} \right) \frac b a .$$
	This yields the following functional equation
	\begin{align*}
		&  F \left( \frac {s^2a} {s+b} \right) + F \left( \frac {sab} {s+b} \right) - F(sa)
		+  k \left( \left( \frac {sa} {s+b} \right)^{-2} \frac a b + s^{-2} \frac b a \right) \\
		=& F \left( \frac {r^2b} {r+a} \right) + F \left( \frac {rab} {r+a} \right) - F(rb)
		+ k \left( \left( \frac {rb} {r+a} \right)^{-2} \frac b a + r^{-2} \frac a b \right) 
	\end{align*}
	for all $a,b,r,s\in\R_+$. An elementary calculation shows that $r \mapsto \frac {c_1} r$ with $c_1 := -2k$ solves the 	above equation. Moreover, this functional equation is an inhomogeneous linear one. Thus, the function $G(r)=F(r)-\frac{c_1}{r}$ 		  solves 
	\begin{equation*}
		  G \left( \frac {s^2a} {s+b} \right) + G \left( \frac {sab} {s+b} \right) - G(sa)
		= G \left( \frac {r^2b} {r+a} \right) + G \left( \frac {rab} {r+a} \right) - G(rb).
	\end{equation*}
	Note that the left hand side of this equation only depends on $s$, while the right hand side only depends on $r$,
	hence they must be a function depending only on $a,b$. So
	\begin{equation}\label{eq: G}
		G \left( \frac {s^2a} {s+b} \right) + G \left( \frac {sab} {s+b} \right) - G(sa) = g(a,b)
	\end{equation}
	for some $g:\R_+^2\to\R$ and all $a,b,s\in\R_+$. Now choose $s = b$ in order to arrive at
		$$ G \left( \frac {ab} 2 \right) + G \left( \frac {ab} 2 \right) - G(ab) = g(a,b) .$$
	We see that $g(a,b) = h(ab)$ for some $h \colon \R_+ \to \R$.
	Let $x,y\in\R_+$. Take $s = 1$, $a = x+y$ and $b = \frac x y$ in \eqref{eq: G} to get
		\begin{equation}\label{eq: G cauchy}
			G(y) + G(x) - G(x+y) = h \left( \frac {x(x+y)} y \right). 
		\end{equation}	
	Choosing $s = 1$, $a = x+y$ and $b = \frac y x$ in \eqref{eq: G} yields
		$$ G(x) + G(y) - G(x+y) = h \left( \frac {y(x+y)} x \right) .$$
	Combining the last two equations gives
		$$ h \left( \frac {x(x+y)} y \right) = h \left( \frac {y(x+y)} x \right) $$
	for all $x,y\in\R_+$. In particular, for $x\in(0,1)$ and $y = \frac {x^2} {1-x}$ we therefore have
		$$ h(1) = h \left( \frac {x^2} {(1-x)^2} \right) .$$
	Since $\lim_{x \to 0^+} \frac {x^2} {(1-x)^2} = 0$ and $\lim_{x \to 1^-} \frac {x^2} {(1-x)^2} = +\infty$,
	we infer that $h$ is constant.
	Set $c_2 = h(1)$. Then by \eqref{eq: G cauchy} the function $r \mapsto G(r) - c_2$ solves Cauchy's functional equation.
	Since $\mu$ is measurable, $G - c_2$ is measurable, too. Thus there exists a constant $c_3$ such that $G(r)=c_2+rc_3$. Finally, recall that $G(r)=F(r)-\frac{c_1}{r}$. Thus $F(r) = \frac {c_1} r + c_2 + c_3 r$.
\end{proof}
Now, we are in a position to establish \th\ref{main2} in dimension two. First, we will settle the even case. Afterwards, the odd case will be treated and finally these two results will be combined to obtain the assertion in general.
\begin{theorem}\th\label{class cP2 even}
	Suppose that $\mu \colon \cP^2_{o} \to \R$ is a measurable and $\SLn[2]$-invariant valuation
	which is even with respect to the reflection at a coordinate hyperplane.
	Then there exist constants $c_1, c_2, c_3 \in \R$ such that
		$$ \mu(P) = c_1 V_2(P^*) + c_2 + c_3 V_2(P) $$
	for all $P \in \cP^2_{o}$.
\end{theorem}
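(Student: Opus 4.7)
The strategy is to combine \th\ref{class cQ2 even}, \th\ref{class cR2}, and \th\ref{class cR2 improved} to pin down $\mu$ on $\cR^2$, and then invoke \th\ref{Ludwig Extension} to promote that identity to all of $\cP_o^2$.

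First I would restrict $\mu$ to $\cQ^2$. Since $\mu$ is even and $\SLn[2]$-invariant, \th\ref{class cQ2 even} provides a function $F \colon \R_+ \to \R$ describing $\mu$ on $\cQ^2$. The restriction of $\mu$ to $\cR^2$ is still a measurable $\SLn[2]$-invariant valuation, so \th\ref{class cR2} produces a constant $k \in \R$ such that $F$ and $k$ describe $\mu$ on $\cR^2$, and \th\ref{class cR2 improved} sharpens this to
\[ F(r) = \frac{c_1}{r} + c_2 + c_3 r, \qquad c_1 = -2k, \]
for real constants $c_1, c_2, c_3$.

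Next I would match the resulting expression to the three model valuations on a typical polytope $P = \left[-ae_1, be_1, c\ptwo{x}{-1}, d\ptwo{y}{1}\right] \in \cR^2(x_n)$. Splitting $P$ along the $x$-axis into two triangles with common base $[-ae_1, be_1]$ gives $V_2(P) = (a+b)(c+d)/2$, independent of $x,y$. Listing the four vertices of the polar quadrilateral $P^*$ (one for each edge of $P$) and computing its area as a trapezoid with parallel sides at $u_1 = -1/a$ and $u_1 = 1/b$ yields
\[ V_2(P^*) = \left(\tfrac 1 a + \tfrac 1 b\right)\left(\tfrac 1 c + \tfrac 1 d\right) - \tfrac 1 2 (x+y)\left(b^{-2} - a^{-2}\right). \]
Substituting the explicit form of $F$ into \eqref{eq: class cR2 - statement} and using $c_1 = -2k$, a short algebraic manipulation shows that on $\cR^2(x_n)$
\[ \mu(P) = c_1\, V_2(P^*) + 4 c_2\, V_0(P) + 2 c_3\, V_2(P). \]
The $\SLn[2]$-invariance of $\mu$ and of each of the three model functionals extends this identity to all of $\cR^2$.

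To conclude, I would rescale constants so that the identity on $\cR^2$ reads $\mu = c_1 V_2(\cdot^*) + c_2 V_0 + c_3 V_2$. Because the volume, the Euler characteristic, and the volume of the polar body are valuations on $\cP_o^2$, the difference $\tilde\mu := \mu - c_1 V_2(\cdot^*) - c_2 V_0 - c_3 V_2$ is itself a valuation on $\cP_o^2$ which vanishes on $\cR^2$; \th\ref{Ludwig Extension} then forces $\tilde\mu \equiv 0$ on $\cP_o^2$, giving the claimed representation. The only genuinely delicate step is the polar-trapezoid computation: once its $(x+y)$-term is recognized as precisely the counterpart of the $k$-contribution in \eqref{eq: class cR2 - statement}, the rest is bookkeeping of constants.
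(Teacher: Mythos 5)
Your proposal is correct and follows essentially the same route as the paper's own proof: invoke \th\ref{class cQ2 even}, \th\ref{class cR2}, and \th\ref{class cR2 improved} to obtain the explicit form of $\mu$ on $\cR^2$, compute $V_2(P)$ and $V_2(P^*)$ for a representative $P\in\cR^2(x_2)$ to match the formula against $c_1V_2(\cdot^*)+c_2V_0+c_3V_2$, extend by $\SLn[2]$-invariance, and finish with \th\ref{Ludwig Extension}. The explicit trapezoid formula for $V_2(P^*)$ and the bookkeeping of constants (the factors $4$ and $2$, which the paper silently absorbs into $c_2,c_3$) are both correct.
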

\begin{proof} Let $P\in \cR^2$ be given by
		$$ P = \left[ -ae_1, be_1, c \ptwo x {-1}, d \ptwo y 1 \right] $$
	such that $P \cap (\R\times\{0\}) = [-ae_1, be_1]$.	In this case, it readily follows that
		$$ V_2(P) = \frac12 \left( ac + bc + ad + bd\right)$$ 
	and
		$$ V_2(P^*) = \frac{1}{ac} + \frac{1}{bc} + \frac{1}{ad} + \frac{1}{bd} 
						- \frac 12 (b^{-2} - a^{-2})(x+y).$$
	From \th\ref{class cQ2 even}, \th\ref{class cR2} and \th\ref{class cR2 improved} we therefore obtain the existence of constants $c_1,c_2,c_3\in\R$ such that
		$$\mu(P) = c_1 V_2(P^*) + c_2 + c_3 V_2(P). $$
	Since every $P\in\cR^2$ has, up to a transformation in $\SLn[2]$, a representation of the form considered above, the last equality actually holds for all $P\in\cR^2$. 
	An application of \th\ref{Ludwig Extension} concludes the proof.
\end{proof}
\begin{theorem}\th\label{class cP2 odd}
	If $\mu \colon \cP^2_{o} \to \R$ is an upper semicontinuous $\SLn[2]$-invariant valuation
	which is odd with respect to the reflection at a coordinate hyperplane,
	then $\mu\equiv 0$.
\end{theorem}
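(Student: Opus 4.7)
My plan is to reduce step by step from $\cP_o^2$ to smaller classes of polytopes, using the results already assembled in the paper, and to exploit oddness at two separate moments: once on $\cQ^2$ via \th\ref{vanish cQ2 odd}, and once on $\cR^2$ to kill the constant $k$ produced by \th\ref{class cR2}.

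First I would restrict $\mu$ to $\cQ^2$. Upper semicontinuity together with $\SLn[2]$-invariance and oddness under a coordinate reflection are exactly the hypotheses of \th\ref{vanish cQ2 odd}, so that result forces $\mu \equiv 0$ on $\cQ^2$. Equivalently, the zero function describes $\mu$ on $\cQ^2$ in the sense of \eqref{eq: class cQ2 even - statement}.

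Second, upper semicontinuity implies Borel measurability, so \th\ref{class cR2} applies with $F \equiv 0$ and delivers a constant $k \in \R$ such that
\[
\mu\!\left[ -ae_1,\, be_1,\, c \ptwo{x}{-1},\, d \ptwo{y}{1} \right] \;=\; k\left( b^{-2} - a^{-2} \right)(x+y)
\]
whenever the parameters satisfy \eqref{admiss R}. To eliminate $k$, I would observe that the right-hand side is invariant under the involution $(a,b,x,y) \mapsto (b,a,-x,-y)$ that records the action of $\phi_1\colon e_1 \mapsto -e_1,\ e_2 \mapsto e_2$ on the vertex list of the polytope; since $\mu$ is odd under $\phi_1$, the two sides must in fact differ by a sign, and this forces $k = 0$. (A cleaner but longer route is to invoke \th\ref{class cR2 improved}, whose conclusion $F(r) = c_1/r + c_2 + c_3 r$ with $c_1 = -2k$ combined with $F \equiv 0$ gives $k = 0$ at once.)

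Finally, $\cR^2$ is by definition the $\SLn[2]$-orbit of $\cR^2(x_2)$, so $\SLn[2]$-invariance of $\mu$ propagates the vanishing just established on $\cR^2(x_2)$ to all of $\cR^2$. An appeal to \th\ref{Ludwig Extension} then extends the vanishing from $\cR^2$ to all of $\cP_o^2$, which is the desired conclusion. I do not anticipate a real obstacle: the preceding lemmas do all the heavy lifting, and the only genuinely new ingredient is the symmetry check forcing $k = 0$, which is a one-line observation.
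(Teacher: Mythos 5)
Your proof is correct and follows the paper's own route almost exactly: \th\ref{vanish cQ2 odd} gives $F\equiv 0$, \th\ref{class cR2} produces the constant $k$, one concludes $\mu$ vanishes on $\cR^2$, and \th\ref{Ludwig Extension} finishes. The only point of divergence is how $k=0$ is obtained. The paper invokes \th\ref{class cR2 improved} to write $F(r)=-2k/r+c_2+c_3r$ and then reads off $k=c_2=c_3=0$ from $F\equiv 0$. Your primary argument is instead a direct symmetry check: applying $\phi_1$ to $P=[-ae_1,be_1,c(x,-1),d(y,1)]$ yields a polytope with parameters $(b,a,c,d,-x,-y)$, and the right-hand side $k(b^{-2}-a^{-2})(x+y)$ is \emph{unchanged} under this substitution (two sign changes cancel), so the formula forces $\mu(\phi_1 P)=\mu(P)$; combined with oddness $\mu(\phi_1 P)=-\mu(P)$ this gives $\mu(P)=0$ on $\cR^2(x_2)$ and hence $k=0$. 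This is a legitimate and slightly more economical shortcut, since it avoids \th\ref{class cR2 improved} entirely, though it buys nothing over the paper's approach for the ``even'' case, where \th\ref{class cR2 improved} is still needed; your parenthetical acknowledging the paper's route is accurate. One small quibble on wording: you say the two sides ``must differ by a sign,'' but in fact the formula is invariant (not anti-invariant) under the reflection, and it is the clash of that invariance with the assumed oddness of $\mu$ that forces the vanishing --- the conclusion is right, the phrasing could mislead.
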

\begin{proof}
	It remains to remove the invariance property with respect to reflections at coordinate hyperplanes from the last theorem. 
  By \th\ref{vanish cQ2 odd} and \th\ref{class cR2} there exists a constant $k\in\R$ such that $F \equiv 0$
	and $k$ describe $\mu$ on $\cR^2$. From \th\ref{class cR2 improved} we deduce the existence of constants $c_2$ and
	$c_3$ with $-2k/r+c_2+c_3r=0$ for every $r \in \R_+$. Thus $k=c_2=c_3=0$ and hence $\mu(P) = 0$ for every 	   
	$P\in\cR^2$. \th\ref{Ludwig Extension} implies that $\mu(P) = 0$ for every $P\in\cP_o^2$. 
\end{proof}	

\begin{theorem}\th\label{class cP2}
	If $\mu \colon \cP^2_{o} \to \R$ is an upper semicontinuous and $\SLn[2]$-invariant valuation,
	then there exist constants $c_1, c_2, c_3 \in \R$ such that
		$$ \mu(P) = c_1 + c_2 V_2(P) + c_3 V_2(P^*) $$
	for all $P \in \cP^2_{o}$.
\end{theorem}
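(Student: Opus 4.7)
The plan is to reduce the statement to the two preceding theorems by splitting $\mu$ into its even and odd parts with respect to a coordinate reflection. Fix some $k \in \{1,2\}$ and set
\[ \mu_+(P) = \tfrac12 [\mu(P) + \mu(\phi_k P)], \qquad \mu_-(P) = \tfrac12 [\mu(P) - \mu(\phi_k P)], \]
so that $\mu_+$ is even and $\mu_-$ is odd with respect to the reflection at a coordinate hyperplane, both are $\SLn[2]$-invariant valuations, and $\mu = \mu_+ + \mu_-$.

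The first step is to apply \th\ref{class cP2 even} to $\mu_+$. Since $\phi_k$ acts continuously on $\cP_o^2$, the composition $P \mapsto \mu(\phi_k P)$ is upper semicontinuous, and hence so is $\mu_+$ as a sum of two upper semicontinuous functions. In particular $\mu_+$ is Borel measurable, which is exactly what \th\ref{class cP2 even} requires, and that result produces constants $c_1, c_2, c_3 \in \R$ with
\[ \mu_+(P) = c_1 + c_2 V_2(P) + c_3 V_2(P^*) \]
for all $P \in \cP_o^2$.

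The second step is to apply \th\ref{class cP2 odd} to $\mu_-$. Here the key observation is that the explicit formula just obtained for $\mu_+$ is continuous on $\cP_o^2$, since both $V_2$ and polarity are continuous on $\cK_o^2$. Therefore $\mu_- = \mu - \mu_+$ is the difference of an upper semicontinuous function and a continuous one, hence itself upper semicontinuous. Thus \th\ref{class cP2 odd} applies and yields $\mu_- \equiv 0$. Combining this with the formula for $\mu_+$ delivers the claimed representation.

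I expect the main obstacle to be precisely this semicontinuity issue: the difference $\mu - \mu \circ \phi_k$ of two upper semicontinuous maps need not itself be upper semicontinuous, so \th\ref{class cP2 odd} cannot be applied to $\mu_-$ directly from its definition. The resolution is to invoke the even case first, which converts $\mu_+$ into an explicit continuous expression, after which the required upper semicontinuity of $\mu_- = \mu - \mu_+$ follows automatically. The rest of the argument is a routine assembly of the two previous theorems.
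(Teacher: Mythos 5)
Your proof is correct and is essentially identical to the paper's: both decompose $\mu$ into even and odd parts, apply \th\ref{class cP2 even} to $\mu_+$ (using that $\mu_+$ is measurable), note that the resulting explicit formula for $\mu_+$ is continuous so that $\mu_-=\mu-\mu_+$ is upper semicontinuous, and then invoke \th\ref{class cP2 odd}. The subtlety you flag about $\mu_-$ not being obviously upper semicontinuous from its definition is exactly the point the paper addresses in the same way.
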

\begin{proof}
	Write $\mu = \mu_+ + \mu_-$, where $\mu_+$ and $\mu_-$ are even and odd with respect to
	the reflection at a coordinate hyperplane, respectively. It follows directly from the definition of 
	$\mu_+$ and $\mu_-$ that both are $\SLn[2]$-invariant valuations. Moreover, since $\mu$ is upper semicontinuous, 
	it is measurable and hence $\mu_+$ and $\mu_-$ are also measurable.
	\th\ref{class cP2 even} implies in particular that $\mu_+$ is continuous.
	Being the difference of an upper semicontinuous and a continuous map, $\mu_-$ is upper semicontinuous.
	The desired result is now a direct consequence of \thref{class cP2 even} and \thref{class cP2 odd}.
\end{proof}
\subsection{The $n$-dimensional Case}
The next result reveals that an $\SLn$-invariant valuation which vanishes on doublepyramids, actually vanishes on $\cR^n$.
\begin{lemma}\th\label{vanish cRn}
	Let $n \geq 3$. If $\mu \colon \cR^n \to \R$ is a measurable $\SLn$-invariant valuation such that
		\begin{equation}\label{eq: mu vanish}
			\mu[P,-ae_n,be_n] = 0 \qquad \textnormal{ for all } P \in \cP_o^{n-1} \textnormal{ and } a,b \in \R_+,
		\end{equation}	
	i.e. it vanishes on $\cQ^n(x_n)$, then $\mu$ vanishes on $\cR^n$.
\end{lemma}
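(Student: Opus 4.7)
By the $\SLn[n]$-invariance of $\mu$, it suffices to verify $\mu$ vanishes on $\cR^n(x_n)$. Given $[P, u_0 - ce_n, v_0 + de_n] \in \cR^n(x_n)$, the shear $\phi(x) = x - (\langle x, e_n\rangle/d)v_0$ belongs to $\SLn[n]$, fixes $e_n^\perp$ pointwise, and maps the upper vertex to $de_n$, so we may assume $v_0 = 0$. Writing $h(P, c, d, u_0) := \mu[P, u_0 - ce_n, de_n]$, the hypothesis gives $h(P, c, d, 0) = 0$, and the task reduces to showing $h \equiv 0$.

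The decisive step is to establish that $u_0 \mapsto h(P, c, d, u_0)$ is linear for fixed $P \in \cP_o^{n-1}$ and $c, d \in \R_+$. To this end, consider the polytope $[P, u_0^{(1)} - ce_n, u_0^{(2)} - ce_n, de_n]$ with two lower vertices, and decompose it by a hyperplane through $de_n$ separating the two lower vertices; introducing auxiliary on-axis vertices (whose $\mu$-values vanish by the hypothesis on $\cQ^n(x_n)$) ensures all fragments stay in the domain. The resulting valuation identity, together with the shear $\phi_w(x) = x + (\langle x, e_n\rangle/c)w \in \SLn[n]$ (which yields $\mu[P, u_0 - ce_n, de_n] = \mu[P, (u_0 - w) - ce_n, de_n + (d/c)w]$), leads to Cauchy's functional equation in $u_0$. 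Since $\mu$ is measurable, the solution is linear: $h(P, c, d, u_0) = \langle \ell(P, c, d), u_0\rangle$ for some $\ell(P, c, d) \in \R^{n-1}$.

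Fixing $c, d$, regard $P \mapsto \ell(P, c, d)$ as a measurable $\R^{n-1}$-valued valuation on $\cP_o^{n-1}$. For $\psi \in \GLn[n-1]$, the extension $\tilde\psi \in \SLn[n]$ with $\tilde\psi|_{e_n^\perp} = \psi$ and $\tilde\psi(e_n) = (\det\psi)^{-1} e_n$, combined with the diagonal scaling $\diag(t, \ldots, t, t^{-(n-1)})$, translates the $\SLn[n]$-invariance of $\mu$ into exactly the $\GLn[n-1]$-equivariance required by Theorem \th\ref{Ludwig moment}. Since $n - 1 \geq 2$, that theorem applies in dimension $n - 1$ and forces $\ell(P, c, d) = \beta(c, d)\, m^*(P)$ for some scalar function $\beta$.

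It remains to show $\beta \equiv 0$. Analogously to the argument in \th\ref{class cR2 improved} for the two-dimensional case, where equating two $\SLn[2]$-representations of a single polytope produced a functional equation pinning down the coefficient of the off-axis correction, one evaluates $\mu$ on a carefully chosen polytope (such as the analog of the polytope $S$ from the proof of \th\ref{class cR2 improved}) in two different $\SLn[n]$-equivalent forms; the two expressions for $h$, combined with the hypothesis $\mu|_{\cQ^n(x_n)} = 0$, force $\beta(c, d) = 0$ for all $c, d \in \R_+$. The main anticipated obstacle is the linearity step: a naive hyperplane cut of the double-apex polytope typically produces pieces in which the origin lies on the boundary (and thus outside $\cR^n$). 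Overcoming this requires the careful introduction and later cancellation of on-axis auxiliary vertices, so that every piece of the decomposition genuinely lies in the domain.
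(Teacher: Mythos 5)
Your high-level skeleton matches the paper's: reduce to additivity of $\mu$ in the horizontal offset, use measurability to solve Cauchy's equation, package the linear functional as an $\R^{n-1}$-valued valuation $\nu$ on $\cP_o^{n-1}$, invoke Ludwig's moment vector characterization (\th\ref{Ludwig moment}), and finally kill the remaining constant. But the two places you flag as requiring care are exactly where your sketch does not close, and the paper handles both quite differently. For the linearity step, the paper does \emph{not} cut a double-lower-vertex polytope. It keeps both vertices off-axis in the form $[P, c(x,-1), d(y,1)]$, first uses the vanishing on $\cQ^n(x_n)$ to show the value is independent of $c$ and $d$, then uses a shear to get $f(x,y)=f(x+y,0)$ and a single valuation identity obtained by inserting an on-axis double pyramid $[P,-re_n,re_n]$ (which cancels by hypothesis) to get $f(x,y)=f(x,0)+f(0,y)$; composing gives Cauchy while every polytope appearing remains in $\cR^n$. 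Your polytope $[P, u_0^{(1)}-ce_n, u_0^{(2)}-ce_n, de_n]$ lies outside $\cR^n$ (two lower vertices are not $\SLn$-reducible to the $[Q,u,v]$ form), so you cannot even evaluate $\mu$ on it; and a hyperplane cut through $de_n$ and the origin produces pieces whose restriction to $e_n^\perp$ does not contain the origin in its interior — these are precisely the degenerate polytopes that require Ludwig's extension machinery, not $\mu$ itself. Your acknowledgement of the problem is not a resolution of it.

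The final step is also resolved differently than you suggest. Rather than a two-representations argument in the style of \th\ref{class cR2 improved}, the paper takes $P=[P',-e_{n-1},e_{n-1}]$ with $P'\in\cP_o^{n-2}$ and offsets $x,y$ orthogonal to $e_{n-1}$; swapping $e_{n-1}\leftrightarrow e_n$ shows $[P,(x,-1),(y,1)]$ is $\SLn$-equivalent to an element of $\cQ^n(x_n)$, so its $\mu$-value is $0$ by hypothesis. Combined with the representation $\mu=\langle \nu(P),x+y\rangle$ this forces $\langle c'm^*[P',-e_{n-1},e_{n-1}],x+y\rangle=0$; choosing $P'=[-e_1,2e_1,-e_2,e_2,\ldots,-e_{n-2},e_{n-2}]$ gives $m^*[P',-e_{n-1},e_{n-1}]=-\tfrac38 e_1\neq 0$, hence $c'=0$. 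This is a direct use of the hypothesis plus an explicit moment computation, not an analogue of the 2D functional-equation argument. I'd also note that your claim that $\nu$ inherits the required $\GLn[n-1]$-equivariance ``exactly'' glosses over the negative-determinant case: the extension $\tilde\psi$ with $\tilde\psi e_n=(\det\psi)^{-1}e_n$ flips the upper and lower halfspaces when $\det\psi<0$, so the resulting identity is not the naive one; the paper handles this by first establishing the $\SLn[n-1]$-covariance together with homogeneity and then separately treating a reflection $\psi$ via the $\SLn$-invariance under $\hat\psi$ with $\hat\psi e_n=-e_n$.
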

\begin{proof}
	We denote by $A$ the set of all $(P,c,d,x,y) \in
	\cP_o^{n-1}\times\R_+^2 \times \R^{2(n-1)}$ 
	for which 
		\begin{equation}\label{admiss R ndim}
			\left[ P, c \ptwo x {-1}, d \ptwo y 1 \right] \cap e_n^{\bot} = P \times \{0\} 
		\end{equation}
	holds. Let $(P,c,d,x,y) \in A$.
	From the valuation property of $\mu$ we infer
	\begin{align*}
		&  \mu \left[ P, c \ptwo x {-1}, d \ptwo y 1 \right] + \mu \left[ P, -t \ptwo y 1, t' \ptwo y 1 \right] \\
		={}& \mu \left[ P, c \ptwo x {-1}, t' \ptwo y 1 \right] + \mu \left[ P, -t \ptwo y 1, d \ptwo y 1 \right]
	\end{align*}
	for suitably small $t,t' > 0$.
	Because $\mu$ is $\SLn$-invariant and vanishes on $\cQ^n(x_n)$, we have
		$$ \mu \left[ P, c \ptwo x {-1}, d \ptwo y 1 \right] = \mu \left[ P, c \ptwo x {-1}, t' \ptwo y 1 \right]. $$
	This shows that the quantity
	\begin{equation*}
		\mu \left[ P, c \ptwo x {-1}, d \ptwo y 1 \right]
	\end{equation*}
	does not depend on $d \in \R_+$.
	Similarly we see that it does not depend on $c \in \R_+$ either.
	Fix $P\in\cP_o^{n-1}$. Given $x,y \in \R^{n-1}$, choose sufficiently small $c$ and $d$ such that $(P,c,d,x,y) \in A$ and set
		$$ f(x,y) = \mu \left[ P, c \ptwo x {-1}, d \ptwo y 1 \right] .$$
	By what we have shown before, the function $f: \R^{2(n-1)} \to \R$ is well defined. Let $\phi \in \SLn$ be the 
	map given by 
		$$\phi e_n = -y_1e_1 - \cdots  -y_{n-1}e_{n-1}+e_n, \quad \phi e_i = e_i \textnormal{ for } 1\leq i\leq n-1 .$$
	Note that $\phi (x,-1) = (x+y, -1)$ 
	and $\phi (y,1) = (0,1)$. Moreover, it follows immediately from the definition of the set $A$ that $(P,c,d,x,y) \in   A$
	if and only if $(P,c,d,x+y,0) \in A$.
	By the $\SLn$-invariance of $\mu$ we therefore have 
	\begin{equation}\label{eq: f sln variant n-dim}
		f(x,y) = f(x+y,0) \qquad \textnormal{for all } \quad x,y \in \R^{n-1}.
	\end{equation}
	If $(P,c,d,x,y)\in A$, then the valuation property of $\mu$ yields
	  $$\mu \left[ P, c\ptwo x {-1}, d\ptwo y 1 \right]	+  \mu \left[ P, -r e_n, r e_n \right] 
			= \mu \left[ P, c\ptwo x {-1}, r e_n \right] +  \mu \left[ P, -r e_n, d \ptwo y 1 \right] 
	  $$
	for suitably small $r \in \R_+$.
	By the definition of $f$, \eqref{eq: f sln variant n-dim}, and the assumption that $\mu$ vanishes on $\cQ^n(x_n)$ we obtain
		$$ f(x+y,0) = f(x,0) + f(0,y) $$
	for all $x , y \in \R^{n-1} $. Set $g(x) = f(x,0)$. Since $f(0,y) = f(y,0)$ by \eqref{eq: f sln variant n-dim}, we have
		$$ g(x+y) = g(x) + g(y) $$
	for all $x , y \in \R^{n-1} $. This is Cauchy's functional equation. Recall that the only measurable solutions of Cauchy's 		functional equation are the linear 	 ones.
	Since the measurability of $\mu$ implies the measurability of $g$, it follows that $g$ has to be linear.
	Thus there exists a $\nu \colon \cP^{n-1}_{o} \to \R^{n-1}$ such that for $(P,c,d,x,y)\in A$
		\begin{equation}\label{eq: def nu n-dim}
		   \mu \left[ P, c\ptwo x {-1}, d\ptwo y 1 \right] = \s{\nu(P)}{x+y} .
		\end{equation}   
	Since $\mu$ is a measurable valuation, it is easy to verify that $\nu$ is also a measurable valuation.
	The $\SLn$-invariance of $\mu$ implies that
		\begin{eqnarray*}
			\mu \left[ r P, c\ptwo x {-1}, d\ptwo y 1 \right]
		  &=&  \mu \left[ P, c\ptwo {r^{-1} x} {-r^{n-1}}, d \ptwo {r^{-1} y} {r^{n-1}} \right]\\
		  &=&  \mu \left[ P, r^{n-1} c \ptwo {r^{-n} x} {-1}, r^{n-1} d \ptwo {r^{-n} y} 1 \right] 
		\end{eqnarray*}  
	for all $r \in \R_+$, each $x,y \in \R^{n-1}$ and sufficiently small $c,d \in \R_+$.
	From \eqref{eq: def nu n-dim} we deduce that $\nu$ is $(-n)$-homogeneous.
	Moreover, the $\SLn$-invariance of $\mu$ also implies that $\nu(\phi P) = \phi^{-t} \nu(P)$ for all 
	$\phi \in \SLn[n-1]$. Consequently,
	\begin{equation}\label{eq: nu sln contravariant}
		\nu(\phi P) = |\det \phi^{-t}| \phi^{-t} \nu (P)
	\end{equation}	
	for all $P \in \cP_o^{n-1}$ and each $\phi \in \GLn[n-1]$ with positive determinant. For a map $\phi \in \GLn[n-1]$ denote by $\hat{\phi} \in \GLn[n]$ the extension given by $\hat{\phi}|_{\R^{n-1}}=\phi$ and $\hat{\phi}e_n= -e_n$. Let $\psi \in \GLn[n-1]$
	be defined by $\psi e_1 = -e_1$ and $\psi e_i = e_i$ for $2 \leq i \leq n-1$. Choose $x=o$ and $y=e_1$. 
	Then \eqref{eq: def nu n-dim} and the $\SLn$-invariance of $\mu$ with respect to $\hat{\psi}$ imply for the first component of $\nu$
	that $\nu_1 (\psi P)=- \nu_1 (P)$. 
	Suppose that $\phi \in \GLn[n-1]$ has negative determinant. Then $\psi \phi $ has positive determinant and by 
	\eqref{eq: nu sln contravariant} we obtain
		$$ \nu_1 (\phi P) = - \nu_1(\psi \phi P) = - |\det \phi^{-t}| \left( \psi \phi^{-t} \nu (P) \right)_1 = 
		   |\det \phi^{-t}| \left( \phi^{-t}\nu (P) \right)_1. $$
	Similarly, the last relation holds for the other components of $\nu$. Thus \eqref{eq: nu sln contravariant} holds for each 
	$\phi \in \GLn[n-1]$. \th\ref{Ludwig moment} implies that $\nu = c' m^*$ for some constant $c' \in \R$.
	Let $P = [P', -e_{n-1}, e_{n-1}]$ for some $P' \in \cP^{n-2}_o$ and choose suitably small vectors $x,y \in\R^{n-1}$ with
	$x_{n-1}, y_{n-1} = 0$. On the one hand, by the $\SLn$-invariance of $\mu$ and \eqref{eq: def nu n-dim} we get for suitable 
	$\tilde{x},\tilde{y} \in \R^{n-1}$
	  $$ \mu \left[ P, \ptwo x {-1}, \ptwo y 1 \right]
		   = \mu \left[ P', -e_n, e_n, \ptwo {\tilde x} 0, \ptwo {\tilde y} 0 \right] 
		   = \s {\nu \left[ P', \ptwo {\tilde x} 0, \ptwo {\tilde y} 0 \right]} {o} 
		   = 0 . $$
	On the other hand, we have
		$$ \mu \left[ P, \ptwo x {-1}, \ptwo y 1 \right] = 
		   \mu \left[ P', -e_{n-1}, e_{n-1}, \ptwo x {-1}, \ptwo y 1 \right]
		   =  \s {\nu \left[ P', -e_{n-1}, e_{n-1} \right]} {x+y} .$$
	Therefore we arrive at
		\begin{equation}\label{eq: cm zero}
		   c' \s {m^* \left[ P', -e_{n-1}, e_{n-1} \right]} {x+y} = 0 
		\end{equation}
	for all $P' \in \cP^{n-2}_{o}$ and suitably small vectors $x,y \in \R^{n-1}$ with $x_{n-1}, y_{n-1} = 0$. Now
	take 
		$$P' = [-e_1, 2e_1, -e_2, e_2, \ldots, -e_{n-2}, e_{n-2}].$$
	Using \eqref{eq: polar product} repeatedly, we see that $(P')^* = [-1,1/2]\times[-1,1]\times\ldots\times[-1,1]$. Again by
	\eqref{eq: polar product} we arrive at
		$$m^* \left[ P', -e_{n-1}, e_{n-1} \right] = -\tfrac38 e_1.$$
	Relation \eqref{eq: cm zero} proves $c'=0$.
	Hence $\nu \equiv 0$ and a glance at \eqref{eq: def nu n-dim}, together with the $\SLn$-invariance of $\mu$, concludes the proof.
\end{proof}
Inductively, we will now prove \th\ref{main2} in dimensions greater or equal than three. We will thereby proceed as in the two dimensional case.
\begin{theorem}\th\label{class cPn even}
	Let $n \geq 2$.
	If $\mu \colon \cP^n_{o} \to \R$ is a measurable $\SLn$-invariant valuation
	which is even with respect to the reflection at a coordinate hyperplane,
	then there exist constants $c_1, c_2, c_3 \in \R$ such that
		$$ \mu(P) = c_1 V_n(P^*) + c_2 + c_3 V_n(P) $$
	for all $P \in \cP^n_{o}$.
\end{theorem}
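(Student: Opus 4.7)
The plan is to induct on $n$; the base case $n=2$ is \thref{class cP2 even}. For the inductive step, assume $n\geq 3$ and that the statement holds in dimension $n-1$. By \thref{vanish cRn} together with \thref{Ludwig Extension} it suffices to produce constants $c_1,c_2,c_3\in\R$ such that $\mu(Q)=c_1V_n(Q^*)+c_2+c_3V_n(Q)$ for every $Q\in\cQ^n(x_n)$. Indeed, once this is established, the difference $\mu-c_1V_n(\cdot^*)-c_2V_0-c_3V_n$ is a measurable $\SLn$-invariant valuation which vanishes on $\cQ^n(x_n)$, hence on $\cR^n$ by \thref{vanish cRn}, and hence on all of $\cP_o^n$ by \thref{Ludwig Extension}.

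For fixed $a,b\in\R_+$ the map $\nu_{a,b}(P):=\mu[P,-ae_n,be_n]$ on $\cP_o^{n-1}$ is a measurable valuation (the double pyramid construction respects convex unions and intersections), and it inherits $\SLn[n-1]$-invariance as well as evenness with respect to coordinate hyperplane reflections from $\mu$. The induction hypothesis therefore furnishes functions $\alpha,\beta,\gamma\colon\R_+^2\to\R$ with
\[\mu[P,-ae_n,be_n]=\alpha(a,b)\,V_{n-1}(P^*)+\beta(a,b)+\gamma(a,b)\,V_{n-1}(P).\]
Since $1$, $V_{n-1}(\cdot)$, and $V_{n-1}(\cdot^*)$ are linearly independent on $\cP_o^{n-1}$ (they are homogeneous of degrees $0$, $n-1$, and $-(n-1)$, respectively), one can choose three polytopes $P_1,P_2,P_3$ and invert a $3\times 3$ system to express $\alpha,\beta,\gamma$ as linear combinations of the measurable $1$-dimensional valuations $[-a,b]\mapsto\mu[P_i,-ae_n,be_n]$. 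Hence $\alpha,\beta,\gamma$ are themselves measurable valuations on $\cP_o^1$. Two uses of $\SLn$-invariance then pin them down. First, applying $\diag(1,\ldots,1,-1,-1)\in\SLn$ and using the evenness of $\mu$ yields $\mu[P,-ae_n,be_n]=\mu[P,-be_n,ae_n]$, so $\alpha,\beta,\gamma$ are symmetric in $(a,b)$, i.e. even as $1$-dim valuations, and \thref{class 1-dim even} reduces each to its diagonal values. Second, $\diag(r^{-1},\ldots,r^{-1},r^{n-1})\in\SLn$ produces the homogeneities $\alpha(sa,sb)=s^{-1}\alpha(a,b)$, $\beta(sa,sb)=\beta(a,b)$, $\gamma(sa,sb)=s\,\gamma(a,b)$ with $s=r^{n-1}\in\R_+$. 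Combining symmetry and homogeneity forces
\[\alpha(a,b)=\tfrac{\alpha(1,1)}{2}\bigl(\tfrac1a+\tfrac1b\bigr),\quad\beta(a,b)=\beta(1,1),\quad\gamma(a,b)=\tfrac{\gamma(1,1)}{2}(a+b).\]
Using $V_n[P,-ae_n,be_n]=\tfrac{a+b}{n}V_{n-1}(P)$ and, from \eqref{eq: polar product}, $V_n([P,-ae_n,be_n]^*)=\bigl(\tfrac1a+\tfrac1b\bigr)V_{n-1}(P^*)$, one rewrites the resulting formula as $c_1V_n(Q^*)+c_2+c_3V_n(Q)$ on $\cQ^n(x_n)$ with suitable $c_1,c_2,c_3$.

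The main obstacle is the rigidity step: the coefficients $\alpha,\beta,\gamma$, a priori only numerical functions on $\R_+^2$, must actually be measurable $1$-dim valuations so that \thref{class 1-dim even} applies, and the $\SLn$-equivariance must supply precisely the homogeneity degrees $-1,0,1$ that, combined with evenness, collapse them into the stated closed forms. Once these two pieces of structure are in hand, the remaining passage from $\cQ^n(x_n)$ to all of $\cP_o^n$ is just the application of the two extension results \thref{vanish cRn} and \thref{Ludwig Extension}.
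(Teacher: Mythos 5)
Your proposal is correct and follows the same overall architecture as the paper: induction on $n$ with base case \thref{class cP2 even}, reduce to showing the difference vanishes on $\cQ^n(x_n)$, then pass to $\cR^n$ via \thref{vanish cRn} and to $\cP_o^n$ via \thref{Ludwig Extension}, using the one-dimensional even classification \thref{class 1-dim even} and $\SLn$-scaling as the rigidity inputs. The difference lies in how the inductive step is organized. You keep $(a,b)$ general, which makes the coefficients $\alpha,\beta,\gamma$ a priori merely numerical functions of $(a,b)$; to get any structure on them you need the linear-independence/matrix-inversion step to recognize them as measurable valuations on $\cP_o^1$, and only then can you feed them into \thref{class 1-dim even} together with the homogeneity. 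The paper avoids this extra layer by first specializing to $a=b=1$, reading off the constants $c_1,c_2,c_3$ from the induction hypothesis plus \eqref{eq: polar product} and Fubini, and then working with the \emph{single} valuation $\rho=\mu-c_1V_n(\cdot^*)-c_2-c_3V_n$: $\SLn$-scaling gives $\rho[P,-ae_n,ae_n]=0$ and \thref{class 1-dim even} applied to $[-a,b]\mapsto\rho[P,-ae_n,be_n]$ immediately forces $\rho$ to vanish on $\cQ^n(x_n)$. Both routes are valid; the paper's ``normalize at $a=b=1$, subtract, and kill'' is shorter because it never has to establish that the $(a,b)$-coefficients are themselves valuations, while your version makes the full $(a,b)$-structure explicit, at the cost of that extra argument.
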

\begin{proof}
	By the $\SLn$-invariance, $\mu$ is actually even with respect to each coordinate hyperplane. 
	We proceed by induction on the dimension $n$. For $n = 2$, 	 the assertion is an immediate consequence 
	of \th\ref{class cP2 even}. Assume that the theorem holds in dimension $n-1$. For $P\in\cP_o^{n-1}$, define 
	$\nu (P)=\mu[P,-e_n,e_n]$. Clearly, $\nu$ is a measurable $\SLn[n-1]$-invariant valuation which is even with 
	respect to the reflection at a coordinate hyperplane. By the induction assumption there exist constants
	$\tilde{c}_1,\tilde{c}_2,\tilde{c}_3\in\R$ such that 
		$$ \nu(P) = \tilde{c}_1 V_{n-1}(P^*) + \tilde{c}_2 + \tilde{c}_3 V_{n-1}(P) $$
	for all $P \in \cP^{n-1}_{o}$. From \eqref{eq: polar product} and Fubini's theorem we obtain that
		$$ \mu[P,-e_n,e_n] = \frac{\tilde{c}_1}{2} V_{n}([P,-e_n,e_n]^*) + \tilde{c}_2 + \frac{n \tilde{c}_3}{2} V_{n}[P,-e_n,e_n]. $$
	Set $c_1 = \tilde{c}_1/2$, $c_2=\tilde{c}_2$, and $c_3=n\tilde{c}_3/2$. The map
		$$ \rho(P) = \mu(P) - c_1 V_n(P^*) - c_2 -c_3 V_n(P),\qquad P \in \cP_o^n, $$
	is a measurable $\SLn$-invariant valuation which is even with respect to the reflection at each coordinate
	hyperplane. Moreover, we know that $\rho[P,-e_n,e_n] = 0$ for all $P \in \cP_o^{n-1}$. So by the $\SLn$-invariance
	of $\rho$ we have
		\begin{equation}\label{eq: rho vanish}
			\rho[P,-ae_n,ae_n] = 0 \qquad \textnormal{ for all } P \in \cP_o^{n-1} \textnormal{ and } a \in \R_+.
		\end{equation}
	Note that, for each $P \in \cP_o^{n-1}$, the map $[-a,b] \mapsto \rho[P,-ae_n,be_n]$ is an even valuation on $\cP_o^1$. From 
	\th\ref{class 1-dim even} we obtain
		$$ \rho[P,-ae_n,be_n] = \tfrac12 \rho[P,-ae_n,ae_n] + \tfrac12 \rho[P,-be_n,be_n] .$$
	By \eqref{eq: rho vanish} we therefore have $\rho[P,-ae_n,be_n] = 0$ for each $P \in \cP_o^{n-1}$ and all $a,b \in \R_+$.
	Thus \thref{vanish cRn} implies that $\rho(P) = 0$ for all $P\in\cR^n$. By \thref{Ludwig Extension} we finally get 
	$\rho \equiv 0$, which immediately gives the desired result.
\end{proof}
\begin{theorem}\th\label{class cPn odd}
	Let $n \geq 2$.
	If $\mu \colon \cP^n_{o} \to \R$ is an upper semicontinuous $\SLn$-invariant valuation
	which is odd with respect to the reflection at a coordinate hyperplane,
	then $\mu\equiv 0$.
\end{theorem}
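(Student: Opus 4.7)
The plan is to induct on $n \geq 2$, with base case $n = 2$ supplied by \th\ref{class cP2 odd}. For the inductive step I would mirror the structure of \th\ref{class cPn even}: reduce the problem to showing that $\mu$ vanishes on $\cQ^n(x_n)$, and then invoke \th\ref{vanish cRn} and \th\ref{Ludwig Extension}. The gain compared with the even case is that here the inductive hypothesis produces outright vanishing, so no normalization or subtraction step is needed.

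To carry out the reduction, first recall that the $\SLn$-invariance of $\mu$ promotes oddness with respect to one coordinate hyperplane to oddness with respect to each of them. For fixed $a, b \in \R_+$, I would consider
\[
   \nu_{a,b} \colon \cP_o^{n-1} \to \R, \qquad \nu_{a,b}(P) = \mu[P, -ae_n, be_n],
\]
where $\cP_o^{n-1}$ is identified with the polytopes contained in $e_n^{\bot}$. The next step is to verify, all by unwinding definitions, that $\nu_{a,b}$ satisfies every hypothesis of the theorem in dimension $n-1$: it is a valuation because union and intersection commute with the doublepyramid construction over the fixed segment $[-ae_n, be_n]$; it is upper semicontinuous because this construction is continuous; it is $\SLn[n-1]$-invariant by extending $\phi \in \SLn[n-1]$ to $\tilde\phi \in \SLn$ via $\tilde\phi e_n = e_n$; and it is odd with respect to the $k$-th coordinate hyperplane of $\R^{n-1}$ for any $k < n$, since $\phi_k$ fixes $e_n$ and therefore maps the doublepyramid $[P, -ae_n, be_n]$ to $[\phi_k P, -ae_n, be_n]$.

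The inductive hypothesis then forces $\nu_{a,b} \equiv 0$ for every choice of $a, b \in \R_+$, which is precisely the statement that $\mu$ vanishes on $\cQ^n(x_n)$. \th\ref{vanish cRn} (whose measurability assumption is met since upper semicontinuous functions are Borel) upgrades this to vanishing on $\cR^n$, and \th\ref{Ludwig Extension} delivers $\mu \equiv 0$ on all of $\cP_o^n$. I do not anticipate any substantive obstacle in executing this plan: the genuinely difficult work is already concentrated in \th\ref{class cP2 odd} (the base case) and in the moment-vector argument of \th\ref{vanish cRn} (the passage from doublepyramids to arbitrary elements of $\cR^n$), so the remaining content is only the routine check that the restriction $\nu_{a,b}$ inherits the hypotheses required to trigger the induction.
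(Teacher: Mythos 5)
Your proposal matches the paper's proof essentially line for line: fix $a,b$, restrict to the slice map $\nu_{a,b}(P)=\mu[P,-ae_n,be_n]$, apply the inductive hypothesis to conclude $\nu_{a,b}\equiv 0$, hence vanishing on $\cQ^n(x_n)$, and then invoke \th\ref{vanish cRn} followed by \th\ref{Ludwig Extension}. The observations you add (measurability from upper semicontinuity, and that the odd case needs no normalization step) are correct and in the spirit of the paper's exposition.
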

\begin{proof}
	By the $\SLn$-invariance, $\mu$ is actually odd with respect to each coordinate hyperplane. 
	We proceed by induction on the dimension $n$. For $n = 2$, the assertion is an immediate consequence 
	of \th\ref{class cP2 odd}. Assume that the theorem holds in dimension $n-1$. Fix $a,b \in \R_+$. 
	For $P\in\cP_o^{n-1}$, define $\nu (P)=\mu[P,-ae_n,be_n]$. Clearly, $\nu$ is an upper
	semicontinuous $\SLn[n-1]$-invariant valuation which is odd with 
	respect to the reflection at a coordinate hyperplane. By the induction assumption we therefore have 
	$\nu \equiv 0$. Thus $\mu[P,-ae_n,be_n] = 0$ for each $P \in \cP_o^{n-1}$ and all $a,b \in \R_+$.
	\thref{vanish cRn} implies that $\mu(P) = 0$ for all $P\in\cR^n$. By \thref{Ludwig Extension} we finally get 
	$\mu \equiv 0$.
\end{proof}	
As in the two dimensional case, one can combine the last two theorems in order to obtain the desired result.
\begin{theorem}\th\label{class cPn}
	Let $n \geq 2$.
	If $\mu \colon \cP^n_{o} \to \R$ is an upper semicontinuous $\SLn$-invariant valuation,
	then there exist constants $c_1, c_2, c_3 \in \R$ such that
		$$ \mu(P) = c_1 +c_2V_n(P)+ c_3 V_n(P^*) $$
	for all $P \in \cP^n_{o}$.
\end{theorem}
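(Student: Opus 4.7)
The plan is to mimic the proof of Theorem \th\ref{class cP2} verbatim, substituting the higher-dimensional analogues \th\ref{class cPn even} and \th\ref{class cPn odd} for their two-dimensional counterparts. Write $\mu = \mu_+ + \mu_-$, where $\mu_+$ and $\mu_-$ are the even and odd parts with respect to the reflection $\phi_k$ at some coordinate hyperplane (as defined in Section \ref{prelim}).

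First I would check that $\mu_+$ and $\mu_-$ are again $\SLn$-invariant valuations. The valuation property is inherited because $P \mapsto \mu(\phi_k P)$ is a valuation and the two parts are linear combinations of $\mu$ and this composition. For $\SLn$-invariance, given $\phi \in \SLn$, the map $\phi_k \phi \phi_k^{-1}$ still has determinant one, so by the $\SLn$-invariance of $\mu$ we get $\mu(\phi_k \phi P) = \mu(\phi_k P)$, whence $\mu_\pm(\phi P) = \mu_\pm(P)$. Upper semicontinuity of $\mu$ gives measurability, and hence both $\mu_+$ and $\mu_-$ are measurable.

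Next I would apply \th\ref{class cPn even} to $\mu_+$, obtaining constants $c_1, c_2, c_3 \in \R$ with $\mu_+(P) = c_1 V_n(P^*) + c_2 + c_3 V_n(P)$ for all $P \in \cP_o^n$. Since $V_n$ and $V_n(\cdot^*)$ are continuous on $\cK_o^n$ (and a fortiori on $\cP_o^n$), this formula shows $\mu_+$ is continuous. Therefore $\mu_- = \mu - \mu_+$ is upper semicontinuous as the difference of an upper semicontinuous and a continuous function, so \th\ref{class cPn odd} yields $\mu_- \equiv 0$. Combining the two gives the claimed representation.

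There is essentially no obstacle here: all of the genuine work was done in establishing \th\ref{class cPn even} (which in turn rested on induction through \th\ref{vanish cRn} and \th\ref{Ludwig Extension}) and \th\ref{class cPn odd}. The only subtle point is the standard observation that pushing the regularity assumption through the even/odd decomposition requires the even part to be not merely measurable but continuous, which is precisely what the explicit formula in \th\ref{class cPn even} provides.
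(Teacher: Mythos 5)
Your proposal is correct and is exactly what the paper has in mind: the paper states that Theorem \th\ref{class cPn} follows by combining \th\ref{class cPn even} and \th\ref{class cPn odd} ``as in the two dimensional case,'' which is precisely the even/odd decomposition argument from \th\ref{class cP2} that you reproduce. The subtle point you highlight---that the explicit formula from \th\ref{class cPn even} upgrades $\mu_+$ to continuous, so that $\mu_-$ inherits upper semicontinuity---is indeed the one step that requires care, and you handle it correctly.
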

Thus, we proved \th\ref{main2} in all dimensions greater or equal than two. Finally, let us show how this result implies \th\ref{main1}. 

\vspace{0.4cm}
\noindent\emph{Proof of \th\ref{main1}.} Suppose that $\mu:\cK_o^n \to \R$
is an upper semicontinuous and $\SLn$-invariant valuation. By \th\ref{main2}, there exist constants
$c_0,c_1,c_2\in\R$ such that 
 $$\nu(K):= \mu(K) - c_0 V_0(K) - c_1 V_n(K) - c_2 V_n(K^*)$$
vanishes on $\cP_o^n$. Moreover, $\nu$ is an upper semicontinuous and $\SLn$-invariant valuation. From 
\th\ref{Ludwig Reitzner Orlicz} we infer that there exists a function $\varphi\in\textnormal{Conc}(\R_+)$ such that
$\nu (K)=\Omega_{\varphi} (K)$ for all $K\in\cK_o^n$. The definition of $\nu$ concludes the proof of \th\ref{main1}. \qed

 	\section{Corollaries}
   	This section contains two consequences of our main \th\ref{main2}. The first result is a characterization of
upper semicontinuous valuations which are defined on {\it all} convex bodies and are invariant with respect to volume preserving affine maps. Besides the Euler characteristic and the volume only the classical affine surface area shows up.
This was previously proved by Ludwig and Reitzner in \cite{Ludwig:Reitzner}.

\begin{theorem}
	Let $n \geq 2$.
	A map $\mu : \cK^n \to \R$ is an upper semicontinuous, $\SLn$ and translation invariant valuation
	if and only if there exist constants $c_0, c_1 \in \R$ and $c_2 \geq 0$ such that
		$$\mu(K) = c_0 V_0(K) + c_1 V_n(K) + c_2 \Omega(K)$$
	for every $K\in\cK^n$.	
\end{theorem}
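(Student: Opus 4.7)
The plan is to apply \th\ref{main1} to the restriction $\mu|_{\cK_o^n}$ and then use translation invariance to cut the representation down to a combination of $V_0$, $V_n$ and the classical affine surface area $\Omega$. Since $\mu|_{\cK_o^n}$ is upper semicontinuous and $\SLn$-invariant, \th\ref{main1} gives constants $c_0,c_1,c'\in\R$ and $\varphi\in\textnormal{Conc}(\R_+)$ with
\[\mu(K)=c_0V_0(K)+c_1V_n(K)+c'V_n(K^*)+\Omega_\varphi(K)\qquad\text{for every }K\in\cK_o^n.\]
It remains to show that $c'=0$, that $\Omega_\varphi=c_2\Omega$ for some $c_2\geq 0$, and that this formula extends from $\cK_o^n$ to all of $\cK^n$. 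The converse direction (that every such combination is an admissible valuation) is standard, since each of $V_0$, $V_n$, $\Omega$ is upper semicontinuous, $\SLn$ and translation invariant on $\cK^n$.

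To eliminate $c'$ I would evaluate on polytopes. For $P\in\cP_o^n$ the generalized Gaussian curvature of $\partial P$ vanishes $\cH^{n-1}$-almost everywhere and $\varphi(0)=0$, so $\Omega_\varphi(P)=0$. For $\tau\in\R^n$ small enough that $P+\tau\in\cP_o^n$, translation invariance of $\mu$, $V_0$ and $V_n$ yields $c'\,V_n(P^*)=c'\,V_n((P+\tau)^*)$. Exhibiting a single polytope (for instance the centered coordinate cube translated along $e_1$) for which this polar volume changes forces $c'=0$.

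With $c'=0$, the functional $\Omega_\varphi=\mu-c_0V_0-c_1V_n$ is translation invariant on $\cK_o^n$. To identify $\varphi$ I would test on Euclidean balls $B_\lambda+\tau$ with $|\tau|<\lambda$. The usual boundary parametrization reduces $\Omega_\varphi(B_\lambda+\tau)=\Omega_\varphi(B_\lambda)$ to the statement that, for every $\lambda>0$, the spherical mean
\[s\mapsto\int_{S^{n-1}}h_\lambda(\lambda+s\theta_1)\,d\sigma(\theta),\qquad h_\lambda(r):=r\,\varphi\!\left(r^{-(n+1)}\lambda^{-(n-1)}\right),\]
is constant on $(-\lambda,\lambda)$. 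Inverting this spherical-average equation by Abel-type arguments yields the symmetry $h_\lambda(\lambda+u)+h_\lambda(\lambda-u)=2h_\lambda(\lambda)$ for every $u\in(-\lambda,\lambda)$. Substituting $g_1(b):=b\,\varphi(b^{-(n+1)})$ and letting $\lambda$ range over $\R_+$ converts this into Jensen's equation $g_1(c)+g_1(2y-c)=2g_1(y)$ valid for all $y>0$ and $c\in(0,2y)$; the continuity of $g_1$ (inherited from the concavity of $\varphi$) forces $g_1$ to be affine, and the condition $\lim_{t\to 0}\varphi(t)=0$ leaves only $\varphi(t)=c_2\,t^{1/(n+1)}$ with $c_2\geq 0$. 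Hence $\Omega_\varphi=c_2\,\Omega$.

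Finally, I would extend the representation from $\cK_o^n$ to $\cK^n$. For full-dimensional $K\in\cK^n$ one chooses $\tau$ with $K+\tau\in\cK_o^n$ and transports the formula back via translation invariance of $\mu$, $V_0$, $V_n$ and $\Omega$. For lower-dimensional $K$, the valuation property of $\mu$ and of $\Omega$ applied to a full-dimensional body $L$ with $K=L\cap H$ (where $H$ is the affine hull of $K$), combined with the vanishing of $V_n$ and $\Omega$ on lower-dimensional bodies, produces $\mu(K)=c_0$, matching the formula. The main obstacle is the third step: translating spherical-mean constancy on balls into a genuine functional equation for $\varphi$ requires the Abel-type inversion together with a rescaling argument that combines information from all $\lambda>0$, at which point the concavity of $\varphi$ supplies just enough regularity to close the argument.
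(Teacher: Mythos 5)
Your proposal is correct in outline and agrees with the paper's proof in the first two steps (invoke \th\ref{main1}, then kill the $V_n(K^*)$-term by translating a cube), but the step that pins down $\varphi$ is genuinely different. The paper translates a \emph{spherical cap} $K=[S^{n-1}\cap H_r^+,-se_1]$ rather than a ball: on the cap $\kappa_0\equiv 1$, the conical part contributes nothing, and letting $r\to 1$ shrinks the cap to a single point, so the mean value theorem turns the equality $\nu(K)=\nu(K+te_1)$ directly into the pointwise identity $\varphi\bigl((1+t)^{-(n+1)}\bigr)(1+t)=\varphi(1)$ for all $t\in(-1,s)$. No integral transform has to be inverted; the localization does all the work, and the conclusion $\varphi(t)=\varphi(1)\,t^{1/(n+1)}$ is immediate. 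Your ball-based route instead produces a spherical-mean constancy condition and then requires an Abel/Riemann--Liouville inversion to pass to the pointwise symmetry $h_\lambda(\lambda+u)+h_\lambda(\lambda-u)=2h_\lambda(\lambda)$, followed by the rescaling in $\lambda$ to obtain Jensen's equation for $g_1$. This is sound (the Riemann--Liouville transform of order $(n-1)/2$ is injective on $L^1_{\textnormal{loc}}$, $\varphi$ is finite-valued because $\mu$ is real-valued, and concavity gives the continuity needed to force $g_1$ affine), and the rescaling trick that converts the one-parameter family of symmetries into the full Jensen equation is a nice observation. But it is more machinery than the paper deploys; the shrinking-cap device gets you the functional equation without any inversion. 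Two small remarks: your extension to lower-dimensional bodies needs to iterate over the dimension rather than cutting directly along $\aff(K)$ when $\dim K<n-1$ (since $\aff(K)$ is not a hyperplane then), and the paper itself glosses over the $\cK_o^n\to\cK^n$ extension, so it is good that you address it explicitly.
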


\begin{proof} From \th\ref{main1} we know that there exist constants $c_0,c_1, c_2\in\R$ and a function $\varphi\in\textnormal{Conc}(\R_+)$ such that
	\[\mu (K)=c_0V_0(K)+c_1 V_n(K)+c_2V_n(K^*)+\Omega_{\varphi} (K)\]
for all $K\in\cK_o^n$. Set $\nu(K):= \mu (K)-c_0V_0(K)-c_1 V_n(K)$. Clearly, $\nu$ is a translation invariant map. Since $\Omega_{\varphi}$ vanishes on $\cP_o^n$, we have $\nu(P) = c_2 V_n((P+t)^*)$ for every $P \in \cP_o^n$ and each $t \in \R^n$ such that $P+t \in \cP_o^n$. The special choice $P = [-1,1]^{n}$ and $t = (1-\varepsilon) e_n$
for $\varepsilon \in (0,1)$ together with \eqref{eq: polar product} immediately imply that $c_2$ has to be zero. Hence $\nu(K)=	\Omega_{\varphi} (K)$.
For $r \in (0,1)$, define a spherical cap by $K = [S^{n-1} \cap H_r^+,-se_1]$, where  $s \in \R_+$ and $H_r^+ = \{x \in \R^n:\,\, x\cdot e_1 \geq r\}$. Note that $\nu(K)=\varphi(1)\cH^{n-1}(S^{n-1} \cap H_r^+)$. By the translation invariance of $\nu$ we have
	$$\nu(K) = \nu(K + te_1) = \int_{S^{n-1}\cap H_r^+}
	  \varphi \left( \frac{1}{(1+t\langle e_1, v\rangle)^{n+1}}\right)(1+t\langle e_1, v\rangle)\, d\cH^{n-1}(v) $$
for $t \in (-1,s)$. By the mean value theorem, we obtain, as $r$ tends to one, that
	$$ \varphi \left( \frac{1}{(1+t)^{n+1}}\right)(1+t) = \varphi(1) $$
for all $t \in (-1,s)$. But $s$ was arbitrary, and hence $\varphi(t) = \varphi(1) t^{\frac{1}{n+1}}$ for all $t\in\R_+$. The definition of $\nu$ completes the proof.	
\end{proof}
The second corollary of \th\ref{main1} is a description of upper semicontinuous and $\SLn$-invariant valuations which are in addition homogeneous. Here, a map $\mu : \cK_o^n \to \R$ is called homogeneous of degree $q$, if $\mu (t K)= t^q \mu(K)$ for all $K\in\cK_o^n$ and each $t\in\R_+$. This result was previously obtained in \cite{LR10} and illuminates the special role of $L_p$ surface areas.
\begin{theorem}
	Let $n \geq 2$.
	A map $\mu : \cK_o^n \to \R$ is an upper semicontinuous and $\SLn$-invariant valuation which is homogeneous of degree $q$ 
	if and only if there exist constants $c_0 \in \R$ and $c_1 \geq 0$ such that
	$$ \mu (K) = \left\{ \begin{array}{ll}
		c_0 V_0(K) + c_1 \Omega_n(K) & \textnormal{ for } q=0\\
		c_1 \Omega_p(K) & \textnormal{ for } -n<q<n \textnormal{ and } q \neq 0\\
		c_0 V_n(K) & \textnormal{ for } q=n\\
		c_0 V_n(K^*) & \textnormal{ for } q=-n\\
		0 & \textnormal{ for } q<-n \textnormal{ or } q > n
		\end{array}\right.
	$$	
		for every $K \in \cK_o^n$, where $p=n(n-q)/(n+q)$.
\end{theorem}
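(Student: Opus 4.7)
The plan is to invoke \th\ref{main1} and then impose $q$-homogeneity term by term. By \th\ref{main1}, any such $\mu$ admits a representation
\[
\mu(K) = c_0 V_0(K) + c_1 V_n(K) + c_2 V_n(K^*) + \Omega_{\varphi}(K)
\]
with constants $c_0, c_1, c_2 \in \R$ and some $\varphi \in \textnormal{Conc}(\R_+)$. Since $V_0$, $V_n$, and $K \mapsto V_n(K^*)$ are homogeneous of degrees $0$, $n$, and $-n$, respectively, and since $\Omega_{\varphi}$ vanishes on polytopes, the homogeneity hypothesis splits into two essentially independent constraints: one on the constants, extracted by testing on scaled polytopes, and one on $\varphi$, extracted by testing on a scaled family of non-polytopal bodies.

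For the first constraint I would substitute $P \in \cP_o^n$ into $\mu(tP) = t^q \mu(P)$ to obtain
\[
c_0(1 - t^q) + c_1 V_n(P)(t^n - t^q) + c_2 V_n(P^*)(t^{-n} - t^q) = 0
\]
for all $t > 0$. Since $V_n(P), V_n(P^*) > 0$, a check of linear independence of $\{1, t^n, t^{-n}, t^q\}$ as functions of $t$ forces $c_0 = 0$ unless $q = 0$, $c_1 = 0$ unless $q = n$, and $c_2 = 0$ unless $q = -n$.

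For the second constraint, after subtracting the polytope piece, $\Omega_{\varphi}$ must itself be $q$-homogeneous. Using the scaling relations $\kappa_0(tK, ty) = t^{-2n} \kappa_0(K, y)$ and $d\mu_{tK}(ty) = t^n\, d\mu_K(y)$, a direct change of variables gives
\[
\Omega_{\varphi}(tK) = t^n \int_{\partial K} \varphi(t^{-2n} \kappa_0(K, x))\, d\mu_K(x).
\]
I would then specialize to $K = rB^n$, where $\kappa_0 \equiv r^{-2n}$, and compare $\Omega_{\varphi}(trB^n)$ with $t^q \Omega_{\varphi}(rB^n)$. Since $r$ and $t$ vary freely, this yields the functional equation $\varphi(\lambda s) = \lambda^{(n-q)/(2n)} \varphi(s)$ for all $\lambda, s \in \R_+$, and hence $\varphi(s) = \varphi(1)\, s^{(n-q)/(2n)}$. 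The defining properties of $\textnormal{Conc}(\R_+)$, namely $\varphi \geq 0$, $\lim_{t \to 0^+} \varphi(t) = 0$, and $\lim_{t \to \infty} \varphi(t)/t = 0$, then impose $\varphi(1) \geq 0$ and $(n-q)/(2n) \in (0,1)$, i.e.\ $-n < q < n$, whenever $\varphi$ is nontrivial; outside this range, $\varphi \equiv 0$. In the admissible range, setting $p = n(n-q)/(n+q)$ identifies $\Omega_{\varphi}$ with a nonnegative multiple of $\Omega_p$.

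Combining the two constraints then produces the five cases of the statement: $q = 0$ leaves $c_0 V_0 + c_1 \Omega_n$ (since $p = n$ when $q = 0$); $q \in (-n, n) \setminus \{0\}$ leaves $c_1 \Omega_p$; $q = n$ and $q = -n$ leave $c_0 V_n$ and $c_0 V_n(K^*)$, respectively; and $|q| > n$ forces $\mu \equiv 0$. The main technical point is extracting the full functional equation for $\varphi$ from a single scaled family of test bodies, but since $\{r^{-2n} : r \in \R_+\}$ already exhausts $\R_+$, evaluation on origin-centered balls pins down $\varphi$ on all of $\R_+$; everything else is routine bookkeeping.
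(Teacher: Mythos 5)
Your proposal is correct and follows essentially the same route as the paper: apply \th\ref{main1}, test on polytopes to eliminate or isolate the constants $c_0,c_1,c_2$ by comparing powers of $t$, then test on dilates of $B^n$ to pin down $\varphi$ as a power function, and finally invoke the defining conditions of $\textnormal{Conc}(\R_+)$ to restrict the exponent and the sign. The paper phrases the ball test slightly more tersely by writing $\varphi(t)=\frac{\mu(B^n)}{n\kappa_n}t^{(n-q)/(2n)}$ directly rather than first recording the scaling relations for $\kappa_0$ and the cone measure, but the underlying computation is identical.
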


\begin{proof}
From \th\ref{main1} we know that there exist constants $c_0,c_1, c_2\in\R$ and a function $\varphi\in\textnormal{Conc}(\R_+)$ such that
	\[\mu (K)=c_0V_0(K)+c_1 V_n(K)+c_2V_n(K^*)+\Omega_{\varphi} (K)\]
for all $K\in\cK_o^n$. Since $\Omega_{\varphi}$ vanishes on $\cP_o^n$ and $\mu$ is supposed to be homogeneous of degree $q$, we deduce that for $t \in \R_+$ and $P\in\cP_o^n$
  \begin{equation}\label{eq: t vanish}
  	t^q \mu (P) = c_0 +c_1 t^n V_n(P) +c_2 t^{-n} V_n (P^*).
  \end{equation}	
Let $q \notin \{0,\pm n\}$. Since the last equation holds for each $t \in \R_+$, we have $c_0=c_1=c_2=0$. Consequently, we have $\mu (K) = \Omega_{\varphi} (K)$ for each $K \in \cK_o^n$. In particular, the choice $K=tB^n$ in the last equation yields
	$$ \varphi(t) = \frac{\mu(B^n)}{n\kappa_n}t^{\frac{n-q}{2n}}$$
for $t\in\R_+$. If $q>n$, then the condition $\lim_ {t \to 0^+} \varphi(t) = 0$ implies $\mu(B^n)=0$ and hence $\mu \equiv 0$. Similarly, if $q < -n$, then the relation $\lim_ {t \to \infty} \varphi(t)/t = 0$ implies $\mu \equiv 0$. If $q \in (-n,n)$ and $q\neq 0$, then an elementary calculation proves
$\mu (K) = c_1\Omega_p (K)$ with $p=n(n-q)/(n+q)$ and some constant $c_1 \geq 0$.

Now, let $q = n$. From \eqref{eq: t vanish} we infer that $c_0 = c_2 = 0$. Thus $\mu (K) - c_1 V_n(K)= \Omega_{\varphi} (K)$ for each $K \in \cK_o^n$. In particular,
	$$ \varphi(t) = \frac{\mu(B^n) - c_1\kappa_n}{n\kappa_n}$$
for every $t \in \R_+$. Since $\lim_ {t \to 0^+} \varphi(t) = 0$, we have $\varphi \equiv 0$ and hence $\mu (K) = c_1 V_n(K)$. The case $q=-n$ is treated analogously.

Finally, suppose that $q=0$. By \eqref{eq: t vanish} we have $c_1 = c_2 = 0$. So $\mu (K) - c_0 V_0(K)= \Omega_{\varphi} (K)$ for each $K \in \cK_o^n$. In particular,
	$$ \varphi(t) = \frac{\mu(B^n) - c_0}{n\kappa_n}\sqrt{t}$$
for $t\in\R_+$. An elementary calculation proves
$\mu (K) = c_0+c_1\Omega_n (K)$ for some constant $c_1 \geq 0$.

\end{proof}

  \section{Acknowledgements}
   	The work of the authors was supported by Austrian Science Fund (FWF) Project
P23639-N18. The work of the second author was supported, during the revision stage of the article, by the European Research Council (ERC),
within the Starting Grant project ``Isoperimetric inequalities and integral geometry'', Project number: 306445.

\vspace{1cm}\noindent
Christoph Haberl and Lukas Parapatits\\
Vienna University of Technology \\
Institute of Discrete Mathematics and Geometry \\
Wiedner Hauptstraße 8--10/104 \\
1040 Wien, Austria \\
christoph.haberl@gmail.com, lukas.parapatits@tuwien.ac.at

\end{document}